\newcommand{\ud}{\mathrm{d}}
\newcommand{\ign}{\mathrm{LS}}
\newenvironment{proof}[1][Proof]{\noindent\textbf{#1.} }{\ \rule{0.5em}{0.5em}}
\title{Contrasting Probabilistic Scoring Rules}
\author{\Large{Reason L. Machete}\footnote{email: {\tt r.l.machete@lse.ac.uk}, tel: +44(0)118 378 6378}\\\\{\normalsize Dept. of Mathematics and Statistics, P. O. Box 220, Reading, RG6 6AX, UK}}
\date{Dated: May 24, 2012}
\newtheorem{proposition}{Proposition}[section]
\begin{document}
\maketitle
\begin{abstract}
There are several scoring rules that one can choose from in order to score probabilistic forecasting models or estimate model parameters. Whilst it is generally agreed that proper scoring rules are preferable, there is no clear criterion for preferring one proper scoring rule above another. This manuscript compares and contrasts some commonly used proper scoring rules and provides guidance on scoring rule selection. In particular, it is shown that the logarithmic scoring rule prefers erring with more uncertainty, the spherical scoring rule prefers erring with lower uncertainty, whereas the other scoring rules are indifferent to either option. 
\end{abstract}
{\bf Keywords}: estimation; forecast evaluation; probabilistic forecasting; utility function
\section{Introduction}
Issuing probabilistic forecasts is meant to express uncertainty about the future evolution of some quantity of interest. Such forecasts arise in many applications such as macroeconomics, finance, weather and climate forecasting. There are several scoring rules that one can choose from in order to elicit probabilistic forecasts, rank competing forecasting models or estimate forecast distribution parameters. It is generally agreed that one should select scoring rules that encourage a forecaster to state his `best' judgement of the distribution, the so called {\em proper} scoring rules~\citep{frie-83,nau-85,til-07}, but which one to use is generally an open question. We shall take scoring rules to be loss functions that a forecaster wishes to minimise. Scoring rules that are minimised if and only if the issued forecasts coincide with the forecaster's best judgement are said to be {\em strictly proper}~\citep{til-07,joc-06}. We shall restrict our attention to strictly proper scoring rules. 

Nonetheless using scoring rules to rank competing forecasting models poses a problem; scoring rules do not provide a universally acceptable ranking of performance. In estimation, different scoring rules will yield different parameter estimates~\citep{til-07,joh-11}. Moreover, a forecaster's best judgement may depart from the ideal; the ideal is a distribution that nature or the data generating process would give~\citep{til-gne}. Although strictly proper scoring rules encourage experts to issue their best judgements, such judgements may yet differ from each other and the ideal. Which scoring rule should one use to choose between two experts? \cite{sav-71} made the instructive statement that ``any criteria for distinguishing among scoring rules must arise out of departures of actual subjects from the ideal.'' There have been some efforts to contrast scoring rules, but none seem to have followed this insight. 

\cite{bick-07} made empirical comparisons of the quadratic, spherical and logarithmic scoring rules and found them to yield different rankings of competing forecasts but failed to see why. Considering a concave nonlinear utility function that explicitly depends on the scoring rule, he also found the logarithmic scoring rule to yield the least departures from  honest opinions at maximal utility, a point he claimed favours it as a rule of choice. But a utility function need not be exponential nor explicitly depend on the scoring rule.~\cite{jos-08} considered weighted scoring rules and showed that they correspond to different utility functions. A limiting feature of the utility functions considered is that they are defined on bounded intervals; there are many applications in which the variable of interest is unbounded. Their motivation for weighted scoring rules is based on betting arguments, but it is not clear what the betting strategies (if any) are. Recently,~\cite{boe-11} empirically compared the Quadratic Probability Score (QPS), Ranked Probability Score (RPS) and the logarithmic scoring rule on UK inflation forecasts by the Monetary Policy Committee and the Survey of External Forecasters (SEF). They found the scoring rules to rank the two sets of distributions similarly. Upon ranking individual forecasters from the SEF, they found the RPS to have better discriminatory power than the QPS, a feature they attributed to the RPS's sensitivity to distance. Despite the foregoing efforts, there is lacking a theoretical assessment of what the preferences of the commonly used scoring rules are with respect to the ideal.

This paper contrasts how different scoring rules would rank competing forecasts of specified departures from ideal forecasts and provides guidance on scoring rule selection. It focuses upon those scoring rules that are commonly used in the forecasting literature, including econometrics and meteorology. More specifically, we contrast the relative information content of forecasts preferred by different scoring rules. Implications of the results on decision making are then suggested, noting that it may be desirable to be more or less uncertain when communicating probabilistic forecasts. We realise that an appropriate utility function may be unknown~\citep{bick-07} and expected utility theory may not even be appropriate~\citep{kah-79}. 

In section~\ref{sec:cat}, we consider the case of scoring categorical forecasts by the Brier score~\citep{brie}, the logarithmic scoring rule~\citep{good-52} and the spherical scoring rule~\citep{frie-83}. For simplicity, special attention is focused on binary forecasts. This section then inspires our study of density forecasts in section~\ref{sec:dens}, where we consider three scoring rules: the Quadratic Score~\citep{til-07}, Logarithmic Score~\citep{good-52}, Spherical Score~\citep{frie-83} and Continuous Ranked Probability Score~\citep{eps-69}. We conclude with a discussion of the results in section~\ref{sec:disc}.
\section{Categorical Forecasts}
\label{sec:cat}
In this section, we consider the scoring of categorical forecasts. The scoring rules considered are Brier score~\citep{brie}, the logarithmic scoring rule and the spherical scoring rule~\citep{frie-83}. In order to aid intuition in the next section, here we focus on the binary case. Another commonly used scoring rule for categorical forecasts is the Ranked Probability Score (RPS)~\citep{eps-69}. In the binary case, the RPS score reduces to the Brier score.

It will be useful to be aware of the following basics. Given any vectors $\boldsymbol{f},\boldsymbol{g}\in\Re^m$, the {\em inner product} between the two vectors is $$\langle\boldsymbol{f},\boldsymbol{g}\rangle=\sum_{i=1}^mf_ig_i,$$
from which the $L_2$-norm is defined by $||\boldsymbol{f}||_2=\langle\boldsymbol{f},\boldsymbol{f}\rangle^{1/2}$.
\subsection{The Brier score}
Consider a probabilistic forecast $\{f_i\}_{i=1}^m$ of $m$ categorical events. Suppose the true distribution is $\{p_i\}_{i=1}^m$. If the actual outcome is the $j$th category, the Brier score is given by~\citep{brie}
\begin{equation*}
BS(\boldsymbol{f},j)=\frac{1}{m}\sum_{i=1}^m(f_i-\delta_{ij})^2,
\end{equation*}
where $\delta_{ij}=0$ if $i\neq j$ and $\delta_{ij}=1$ if $i=j$. If follows that if we expand out the bracket we get
\begin{equation*}
BS(\boldsymbol{f},j)=\frac{1}{m}\left(\sum_{i=1}^mf_i^2-2f_j+1\right).
\end{equation*}
The expected Brier score is then given by
\begin{align*}
\mathbb{E}[BS(\boldsymbol{f},J)]&=\sum_{j=1}^mp_jBS(f,j)\displaybreak[0]\\
&=\frac{1}{m}\sum_{i=1}^m\left(f_i^2-2p_if_i+p_i\right)\displaybreak[0]\\
&=\frac{1}{m}\sum_{i=1}^m\left[\left(f_i-p_i\right)^2+p_i-p_i^2\right]\displaybreak[0]\\
&=\frac{1}{m}\left\{||\boldsymbol{\gamma}||_2^2+\sum_{i=1}^mp_i(1-p_i)\right\},
\end{align*}
where $\boldsymbol{\gamma}$ is a vector with components $\gamma_i=f_i-p_i$ for all $i=1,\ldots,m$. It is evident from the last expression on the right hand side that the Brier score is effective with respect to the metric $d_2(\boldsymbol{f},\boldsymbol{g})=||\boldsymbol{f}-\boldsymbol{g}||_2$. When $m=2$, we can put $f_1=p+\gamma$, $p_1=p$ and $p_2=q$ and obtain
\begin{equation*}
\mathbb{E}[BS(\boldsymbol{f},J)]=\gamma^2+pq.
\end{equation*}
It follows that $\pm\gamma$ will yield the same Brier score. This means the Brier score does not discriminate between over-estimating and under-estimating the probabilities with the same amount. Further more, for any two forecasts $\boldsymbol{f}_i=(p+\gamma_i,q-\gamma_i)$, $i=1,2$, with $|\gamma_1|<|\gamma_2|$, the Brier score would prefer the forecast corresponding to $\gamma_1$.
\subsection{Logarithmic scoring rule}
The logarithmic scoring rule was proposed by~\cite{good-52}. It was later termed Ignorance by~\cite{roul-02} when they introduced it to the meteorological community. Given a probabilistic forecast $\boldsymbol{f}=(f_1,f_2,\ldots,f_m)$, the logarithmic scoring rule is given by $\ign(\boldsymbol{f},j)=-\log f_j$, where $j$ denotes the category that materialises. Let us consider the expected logarithmic score of the forecasting scheme $\boldsymbol{f}=(p+\gamma,q-\gamma)$:
\begin{equation}
\mathbb{E}[\ign(\boldsymbol{f},J)]=-p\log(p+\gamma)-q\log(q-\gamma),
\label{eqn:ign1}
\end{equation}
where $J\in\{1,2\}$ is a random variable. The above expectation is also referred to as the Kullback-Leibler Information Criterion~\citep{gra-06}. As noted by~\cite{frie-83}, this scoring rule is not effective. 

If we let $\boldsymbol{f}_+=(p+\gamma,q-\gamma)$ and $\boldsymbol{f}_-=(p-\gamma,q+\gamma)$, then we can define $\mathbb{E}[\ign]_{\pm}=\mathbb{E}[\ign(\boldsymbol{f}_+,J)]-\mathbb{E}[\ign(\boldsymbol{f}_-,J)]$. Then, assuming that $\gamma>0$ without loss of generality, 
\begin{equation}
\mathbb{E}[\ign]_{\pm}=p\log\left(\frac{p-\gamma}{p+\gamma}\right)+q\log\left(\frac{q+\gamma}{q-\gamma}\right)
\label{eqn:ign2}
\end{equation}
Note that when $p=q=0.5$, then $\mathbb{E}[\ign]_{\pm}=0$, otherwise $\mathbb{E}[\ign]_{\pm}\neq0$. Differentiating~(\ref{eqn:ign2}) with respect to $\gamma$ yields
\begin{equation}
\frac{\ud}{\ud\gamma}\mathbb{E}[\ign]_{\pm}=\frac{2\gamma^2(p^2-q^2)}{(p^2-\gamma^2)(q^2-\gamma^2)}
\label{eqn:ign3}
\end{equation}
Expressions~(\ref{eqn:ign2}) and~(\ref{eqn:ign3}) are well defined provided $\gamma<\min(p,q)$.
\begin{eqnarray*}
\frac{\ud}{\ud\gamma}\mathbb{E}[\ign]_{\pm}>0,\quad\mbox{if $p>q$}\\
\frac{\ud}{\ud\gamma}\mathbb{E}[\ign]_{\pm}<0,\quad\mbox{if $p<q$}
\end{eqnarray*}
It follows that $\mathbb{E}[\ign]_{\pm}>0$ if $p>q$ and $\mathbb{E}[\ign]_{\pm}<0$ if $p<q$. In other words, the logarithmic score penalises over confidence on the likely outcome and rewards erring on the side of caution. Given forecasting schemes that are equally calibrated, the logarithmic score will prefer the one with a higher entropy. To explain this further, let us denote the entropy of the forecast corresponding to $\gamma$ by $h(\gamma)$, i.e.
\begin{equation}
h(\gamma)=-(p+\gamma)\log(p+\gamma)-(q-\gamma)\log(q-\gamma).
\label{eqn:ent1}
\end{equation}
We now define the function $G(\gamma)=h(\gamma)-h(-\gamma)$ and claim that $G(\gamma)<0$ for $0<\gamma<q<p$. To prove this claim, we first note that $G(0)=0.$ It then suffices to show that $G'(0)<0.$ Note that
\begin{eqnarray*}
G'(\gamma)&=&-\log(p+\gamma)+\log(q-\gamma)-\log(p-\gamma)+\log(q+\gamma)\\
 &=&-\log\left(\frac{p+\gamma}{q+\gamma}\right)+\log\left(\frac{q-\gamma}{p-\gamma}\right).
\end{eqnarray*}
The condition $p>q$ implies that $G'(\gamma)<0$ for all $\gamma\in(0,q)$. Therefore, it is evident that, of the two forecasts, the logarithmic score prefers the one with a higher entropy. We have thus proved the following proposition:
\begin{proposition}
\label{prop4}
Given two forecasts, $\boldsymbol{f}_+=(p+\gamma,q-\gamma)$ and $\boldsymbol{f}_-=(p-\gamma,q+\gamma)$, where $0<\gamma<q<p$, the logarithmic scoring rule prefers $\boldsymbol{f}_-$. Moreover, $\boldsymbol{f}_-$ has a higher entropy than $\boldsymbol{f}_+$.
\end{proposition}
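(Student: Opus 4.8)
The plan is to prove the two assertions in turn, and both flow from machinery already assembled above. For the \emph{preference} claim, recall that the rules here are losses, so ``$\boldsymbol{f}_-$ is preferred'' means precisely $\mathbb{E}[\ign(\boldsymbol{f}_-,J)]<\mathbb{E}[\ign(\boldsymbol{f}_+,J)]$, that is $\mathbb{E}[\ign]_{\pm}>0$. Under the hypothesis $0<\gamma<q<p$ we have $p>q$, so I would invoke~(\ref{eqn:ign3}): the right-hand side there is strictly positive when $p>q$ (numerator $2\gamma^2(p^2-q^2)>0$ and each factor $p^2-\gamma^2$, $q^2-\gamma^2$ is positive since $\gamma<q<p$). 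Because $\mathbb{E}[\ign]_{\pm}$ vanishes at $\gamma=0$ by~(\ref{eqn:ign2}), monotone increase forces $\mathbb{E}[\ign]_{\pm}>0$ on the whole admissible range. That settles the first sentence of the proposition.

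For the \emph{entropy} claim I would show $G(\gamma)=h(\gamma)-h(-\gamma)<0$ on $(0,q)$, where by~(\ref{eqn:ent1}) the quantities $h(\gamma)$ and $h(-\gamma)$ are the entropies of $\boldsymbol{f}_+$ and $\boldsymbol{f}_-$ respectively. The strategy is boundary value plus monotonicity: $G(0)=0$, so it suffices to prove $G'(\gamma)<0$ throughout $(0,q)$, which makes $G$ strictly decreasing and hence negative for $\gamma>0$. Differentiating~(\ref{eqn:ent1}) the additive $\pm1$ terms cancel, leaving $h'(\gamma)=\log\frac{q-\gamma}{p+\gamma}$; the chain rule then gives $\frac{\ud}{\ud\gamma}h(-\gamma)=\log\frac{p-\gamma}{q+\gamma}$, whence
\begin{equation*}
G'(\gamma)=-\log\left(\frac{p+\gamma}{q+\gamma}\right)+\log\left(\frac{q-\gamma}{p-\gamma}\right).
\end{equation*}
Because $p>q$ and $0<\gamma<q<p$, every argument above is positive, the first fraction exceeds $1$ and the second lies below $1$, so both logarithms contribute a negative term; hence $G'<0$ on $(0,q)$ and $h(-\gamma)>h(\gamma)$ as required.

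The only real care needed is the sign bookkeeping: one must check that $p-\gamma$ and $q-\gamma$ remain positive (guaranteed by $\gamma<q<p$) before taking logarithms, and note that the \emph{global} inequality $G'<0$ on the interval --- not merely the value of $G'$ at the origin --- is what licenses the conclusion $G(\gamma)<G(0)=0$. A cleaner, more conceptual route I would keep in reserve is to exploit the unimodality of the binary entropy about the uniform point: since $p>q$ forces $p>\tfrac12$, a short case check shows $p-\gamma$ lies strictly closer to $\tfrac12$ than $p+\gamma$ does, and the binary entropy is strictly decreasing in distance from $\tfrac12$, so $\boldsymbol{f}_-$ automatically carries the larger entropy. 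I expect no genuine obstacle here; the proposition is essentially read off from the two monotonicity computations above.
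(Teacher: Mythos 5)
Your proof is correct and follows essentially the same route as the paper: the preference claim via the sign of the derivative in~(\ref{eqn:ign3}) combined with $\mathbb{E}[\ign]_{\pm}=0$ at $\gamma=0$, and the entropy claim via $G(0)=0$ together with $G'(\gamma)<0$ on $(0,q)$. If anything, you are slightly more careful than the paper, which asserts that it ``suffices to show $G'(0)<0$'' --- as you rightly note, it is the global inequality $G'<0$ on the whole interval (which the paper does in fact establish) that licenses the conclusion $G(\gamma)<G(0)=0$.
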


What about when there are two forecasts $\boldsymbol{f}_i=(p+\gamma_i,q-\gamma_i)$, $i=1,2$ with $0<\gamma_1<\gamma_2<q$ and $p>q$? It is obvious that the Brier score will prefer $\boldsymbol{f}_1$ over $\boldsymbol{f}_2$. The question is, which of the two forecasts will the logarithmic scoring rule prefer? We answer this question by stating the following proposition: 
\begin{proposition}
\label{prop2}
Given two forecasts $\boldsymbol{f}_i=(p+\gamma_i,q-\gamma_i)$, $i=1,2$ with $0<\gamma_1<\gamma_2<q$ and $p>q$, the logarithmic scoring rule prefers $\boldsymbol{f}_1$ over $\boldsymbol{f}_2$.
\end{proposition}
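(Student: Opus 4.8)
The plan is to read ``the logarithmic scoring rule prefers $\boldsymbol{f}_1$'' as the statement that $\boldsymbol{f}_1$ incurs the smaller expected score, i.e. $\mathbb{E}[\ign(\boldsymbol{f}_1,J)]<\mathbb{E}[\ign(\boldsymbol{f}_2,J)]$, since by convention a scoring rule is a loss to be minimised. Writing the expected score as a function of the single perturbation parameter,
\[
S(\gamma)=\mathbb{E}[\ign(\boldsymbol{f},J)]=-p\log(p+\gamma)-q\log(q-\gamma),\qquad 0<\gamma<q,
\]
the claim $\mathbb{E}[\ign(\boldsymbol{f}_1,J)]<\mathbb{E}[\ign(\boldsymbol{f}_2,J)]$ for every pair $0<\gamma_1<\gamma_2<q$ is exactly the assertion that $S$ is strictly increasing on $(0,q)$. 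So I would reduce the whole proposition to a monotonicity statement about $S$.

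First I would differentiate $S$. A direct calculation gives
\[
S'(\gamma)=-\frac{p}{p+\gamma}+\frac{q}{q-\gamma}=\frac{q(p+\gamma)-p(q-\gamma)}{(p+\gamma)(q-\gamma)}=\frac{\gamma(p+q)}{(p+\gamma)(q-\gamma)}.
\]
Next I would check the sign of each factor on the interval $0<\gamma<q$: the numerator $\gamma(p+q)$ is positive because $\gamma>0$ and $p,q>0$, while in the denominator both $p+\gamma>0$ and $q-\gamma>0$. Hence $S'(\gamma)>0$ throughout $(0,q)$, so $S$ is strictly increasing there and $S(\gamma_1)<S(\gamma_2)$ follows immediately, which is the desired conclusion.

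The calculation itself is routine, so the ``hard part'' is mainly interpretive. One point worth flagging is that the monotonicity of $S$ does not actually require $p>q$; the sign of $S'$ depends only on $0<\gamma<q$ (which guarantees $q-\gamma>0$) and on $p,q>0$. The hypothesis $p>q$ is what makes $\boldsymbol{f}_i=(p+\gamma_i,q-\gamma_i)$ the \emph{over-confident} family, since it over-states the more likely category, so it fixes the decision-theoretic reading --- among forecasts that err by over-stating the likely outcome, the logarithmic rule favours the least over-confident one --- rather than being needed for the inequality. I would remark on this to connect Proposition~\ref{prop2} with Proposition~\ref{prop4}, where $p>q$ genuinely selects which of $\boldsymbol{f}_+$ and $\boldsymbol{f}_-$ is preferred. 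An alternative route would be to write $S(\gamma_2)-S(\gamma_1)=\int_{\gamma_1}^{\gamma_2}S'(\gamma)\,\ud\gamma$ and bound the integrand, but the direct monotonicity argument is cleaner and I would prefer it.
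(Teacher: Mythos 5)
Your proof is correct and is essentially the paper's own argument: both differentiate the expected score $\mathbb{E}[\ign(\boldsymbol{f},J)]=-p\log(p+\gamma)-q\log(q-\gamma)$ with respect to $\gamma$ (the paper simplifies $\gamma(p+q)$ to $\gamma$ using $p+q=1$) and conclude monotonicity on $(0,q)$. Your side remark that $p>q$ is not needed for the monotonicity itself, only for the interpretive link to Proposition~\ref{prop4}, is accurate and a nice observation.
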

\begin{proof} In order to prove this proposition, it is sufficient to consider the expected logarithmic score of the forecast $\boldsymbol{f}=(p+\gamma,q-\gamma)$, which is given by equation~(\ref{eqn:ign1}). Differentiating the equation with respect to $\gamma$ yields
\begin{equation}
\frac{\ud}{\ud\gamma}\mathbb{E}[\ign(\boldsymbol{f},J)]=\frac{\gamma}{(p+\gamma)(q-\gamma)}
\label{eqn:ign5}
\end{equation}
Equation~(\ref{eqn:ign5}) implies that, if $q>\gamma>0$, $\mathbb{E}[\ign(\boldsymbol{f},J)]$ is an increasing function of $\gamma$. Hence, the logarithmic scoring rule prefers the forecast $\boldsymbol{f}_1$.

On the other hand, if $\gamma<0$ with $|\gamma|<p$, then equation~(\ref{eqn:ign5}) implies that $\mathbb{E}[\ign(\boldsymbol{f},J)]$ is a decreasing function of $\gamma$. It then follow that, given  $\gamma_2<\gamma_1<0$ with $|\gamma_2|<p$, the logarithmic scoring rule will prefer the forecast $\boldsymbol{f}_1$.
\end{proof}

Finally, let us consider the case of two forecasts $\boldsymbol{f}_1=(p+\gamma_1,q-\gamma_1)$ and $\boldsymbol{f}_2=(p-\gamma_2,q+\gamma_2)$, where $0<\gamma_1<\gamma_2<q<p$. Again, it is clear that the Brier score will prefer the forecast $\boldsymbol{f}_1$ over $\boldsymbol{f}_2$. It remains to be seen which forecast the logarithmic scoring rule will prefer. This may be determined by considering the function $H(\gamma_1,\gamma_2)$, where 
\begin{equation}
H(\gamma_1,\gamma_2)=p\log\left(\frac{p-\gamma_2}{p+\gamma_1}\right)+q\log\left(\frac{q+\gamma_2}{q-\gamma_1}\right)
\label{eqn:ign6}
\end{equation}
Note that $H(\gamma_1,\gamma_2)=\mathbb{E}[\ign(\boldsymbol{f}_1,J)]-\mathbb{E}[\ign(\boldsymbol{f}_2,J)]$. The forecast $\boldsymbol{f}_1$ is preferred if $H(\gamma_1,\gamma_2)<0.$ The following proposition gives insights of relative forecast performance in the parameter space. 
\begin{proposition}
\label{prop1}
Given that $0<\gamma_2<q<p$, there exists $\gamma^*\in(0,\gamma_2)$ such that (a) $H(\gamma*,\gamma_2)=0$, (b) $H(\gamma_1,\gamma_2)>0$ for $\gamma_1\in(\gamma^*,\gamma_2)$ and (c) $H(\gamma_1,\gamma_2)<0$ for $\gamma_1\in(0,\gamma^*)$.
\end{proposition}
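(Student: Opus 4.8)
The plan is to treat $H(\cdot,\gamma_2)$ as a one–variable function of $\gamma_1$ on the interval $(0,\gamma_2)$ for fixed $\gamma_2$, and to combine a strict monotonicity argument with the intermediate value theorem. The key observation I would start from is that, by the remark following~(\ref{eqn:ign6}), $H(\gamma_1,\gamma_2)=\mathbb{E}[\ign(\boldsymbol{f}_1,J)]-\mathbb{E}[\ign(\boldsymbol{f}_2,J)]$, and the second term is independent of $\gamma_1$. Hence $\partial H/\partial\gamma_1$ equals the derivative of the expected ignorance of $\boldsymbol{f}_1=(p+\gamma_1,q-\gamma_1)$, which is given by~(\ref{eqn:ign5}). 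Using the normalisation $p+q=1$, that derivative is $\gamma_1/[(p+\gamma_1)(q-\gamma_1)]$, which is strictly positive for $\gamma_1\in(0,q)$. Therefore $H(\cdot,\gamma_2)$ is strictly increasing on $(0,\gamma_2)$.

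Next I would pin down the signs at the two ends of the interval, and here the earlier results do most of the work. As $\gamma_1\to 0^+$ the first forecast becomes $\boldsymbol{f}_1=(p,q)$, the ideal forecast, so $H(0,\gamma_2)=\mathbb{E}[\ign((p,q),J)]-\mathbb{E}[\ign(\boldsymbol{f}_2,J)]$. Reading~(\ref{eqn:ign5}) again shows that expected ignorance, as a function of the signed error, is minimised at zero error: it decreases for negative error and increases for positive error. Since $\boldsymbol{f}_2$ corresponds to the nonzero error $-\gamma_2$, this gives $H(0,\gamma_2)<0$. At the other endpoint, setting $\gamma_1=\gamma_2$ yields $H(\gamma_2,\gamma_2)=p\log\frac{p-\gamma_2}{p+\gamma_2}+q\log\frac{q+\gamma_2}{q-\gamma_2}$, which is exactly $\mathbb{E}[\ign]_{\pm}$ from~(\ref{eqn:ign2}) evaluated at $\gamma=\gamma_2$, already shown to be strictly positive whenever $p>q$. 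Thus $H(0,\gamma_2)<0<H(\gamma_2,\gamma_2)$.

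Putting the pieces together, the continuity of $H(\cdot,\gamma_2)$ on $[0,\gamma_2]$ together with this sign change gives, by the intermediate value theorem, a point $\gamma^*\in(0,\gamma_2)$ with $H(\gamma^*,\gamma_2)=0$, which is claim~(a). Strict monotonicity then makes this zero unique and forces $H(\gamma_1,\gamma_2)<0$ for $\gamma_1\in(0,\gamma^*)$ and $H(\gamma_1,\gamma_2)>0$ for $\gamma_1\in(\gamma^*,\gamma_2)$, establishing claims~(c) and~(b).

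I do not expect a serious obstacle, since the analytic content is already supplied by~(\ref{eqn:ign5}) and~(\ref{eqn:ign2}). The only point requiring care is to notice that the $\gamma_2$–dependent term is constant in $\gamma_1$, so that the sign of $\partial H/\partial\gamma_1$ is governed entirely by the single–forecast derivative~(\ref{eqn:ign5}), and to invoke $p+q=1$ so that this derivative collapses to the clean form that is manifestly positive on $(0,q)$. Everything else is a routine application of monotonicity and the intermediate value theorem.
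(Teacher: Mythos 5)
Your proof is correct and takes essentially the same route as the paper's: both pin down the endpoint signs ($H(\gamma_2,\gamma_2)>0$ via Proposition~\ref{prop4}, and $H(0,\gamma_2)<0$ from the fact that expected ignorance is minimised at zero error) and then invoke the intermediate value theorem, with the positivity of $\partial H/\partial\gamma_1$ on $(0,q)$ supplying monotonicity. If anything, you are slightly more explicit than the paper, which stops at part (a) after the intermediate value theorem and leaves parts (b) and (c) implicit in the sign of the first partial derivative, whereas you spell out that strict monotonicity in $\gamma_1$ forces uniqueness of $\gamma^*$ and the stated signs on either side.
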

Before proving the above proposition, we remark that $H(\gamma_1,\gamma_2)<0$ if and only if the logarithmic scoring rule prefers the forecast $\boldsymbol{f}_1$. This proposition implies that the logarithmic scoring rule and the Brier score prefer different forecasts when $\gamma_1\in(\gamma^*,\gamma_2)$. Let us now consider the proof of this proposition. \\\\
\begin{proof}
In proving this proposition, it is useful to bear in mind that $H(\gamma_2,\gamma_2)>0$. The partial derivatives of equation~(\ref{eqn:ign6}) are given by 
\begin{equation}
\frac{\partial H}{\partial\gamma_1}=\frac{\gamma_1}{(p+\gamma_1)(q-\gamma_1)}\quad\mbox{and}\quad\frac{\partial H}{\partial\gamma_2}=\frac{-\gamma_2}{(p+\gamma_2)(q-\gamma_2)}.
\label{eqn:ign13}
\end{equation}
Further more, we can differentiate equations~(\ref{eqn:ign13}) to obtain
\begin{equation}
\frac{\partial^2H}{\partial\gamma_1^2}=\frac{pq+\gamma_1^2}{(p+\gamma_1)^2(q-\gamma_1)^2}\quad\mbox{and}\quad\frac{\partial^2H}{\partial\gamma_2^2}=\frac{-(pq+\gamma_2^2)}{(p-\gamma_2)^2(q+\gamma_2)^2}.
\label{eqn:ign14}
\end{equation}
It follows from equations~(\ref{eqn:ign13}) that $\partial H/\partial\gamma_1=0$ at $\gamma_1=0$ and $\partial H/\partial\gamma_2=0$ at $\gamma_2=0$. Since $\partial^2 H/\partial\gamma_1^2>0$ for all $\gamma_1$, $H(\gamma_1,\cdot)$ has a global minimum at $\gamma_1=0$. Similarly, $H(\cdot,\gamma_2)$ has a global maximum at $\gamma_2=0$ since $\partial^2 H/\partial\gamma_2^2<0$ for all $\gamma_2$ and the first partial derivative with respect to $\gamma_2$ vanishes there. In particular, $H(0,\gamma_2)\le H(0,0)=0$, i.e. $H(0,\gamma_2)\le0$. For $\gamma_2>0$, we have the strict inequality, $H(0,\gamma_2)<0$. But we also have $H(\gamma_2,\gamma_2)>0$ from Proposition~\ref{prop4}. It, therefore, follows from the intermediate value theorem that $H(\gamma_1,\gamma_2)=0$ for some $\gamma_1=\gamma^*\in(0,\gamma_2)$, which completes the proof.
\end{proof}

\begin{proposition}
\label{prop3}
For positive $\gamma_1$ and $\gamma_2$ such that $\gamma_1<q<p$ and $\gamma_2<p$, the entropy of the forecast $\boldsymbol{f}_1=(p+\gamma_1,q-\gamma_1)$ is lower than that of the forecast $\boldsymbol{f}_2=(p-\gamma_2,q+\gamma_2)$ whenever $\gamma_2\le(p-q)/2$.
\end{proposition}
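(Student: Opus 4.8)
The plan is to exploit the fact that, for a two-category distribution, entropy depends only on how far the probabilities lie from the uniform distribution. Writing the first component as $a=p+\gamma$ and using $p+q=1$, the entropy $h$ of~(\ref{eqn:ent1}) coincides with the binary entropy $H_b(a)=-a\log a-(1-a)\log(1-a)$. The entropy of $\boldsymbol{f}_1=(p+\gamma_1,q-\gamma_1)$ is then $H_b(p+\gamma_1)$ and that of $\boldsymbol{f}_2=(p-\gamma_2,q+\gamma_2)$ is $H_b(p-\gamma_2)$, so the claim reduces to showing $H_b(p+\gamma_1)<H_b(p-\gamma_2)$.

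First I would record the two standard properties of $H_b$ on $(0,1)$: it is symmetric about $1/2$, i.e.\ $H_b(a)=H_b(1-a)$, and, since $H_b'(a)=\log\frac{1-a}{a}$ vanishes only at $a=1/2$ and is negative for $a>1/2$, it is strictly decreasing in the distance $|a-1/2|$. Consequently $H_b(a_1)<H_b(a_2)$ if and only if $|a_1-1/2|>|a_2-1/2|$, and the problem becomes a comparison of these two distances.

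Next I would compute the distances. Since $p>q$ and $p+q=1$ give $p-1/2=(p-q)/2>0$, the first component of $\boldsymbol{f}_1$ satisfies $p+\gamma_1>1/2$, so $|(p+\gamma_1)-1/2|=(p-q)/2+\gamma_1$. The hypothesis $\gamma_2\le(p-q)/2$ guarantees that the first component of $\boldsymbol{f}_2$ has not crossed the uniform point: $p-\gamma_2\ge p-(p-q)/2=1/2$, whence $|(p-\gamma_2)-1/2|=(p-q)/2-\gamma_2$. Subtracting, the two distances differ by exactly $\gamma_1+\gamma_2>0$, so $\boldsymbol{f}_1$ lies strictly farther from uniform and therefore has strictly lower entropy, as claimed.

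The step I expect to be the crux is the handling of the hypothesis $\gamma_2\le(p-q)/2$: it is exactly the condition under which $p-\gamma_2$ stays on the same side of $1/2$ as $p$, so that $|(p-\gamma_2)-1/2|=(p-q)/2-\gamma_2$ without an absolute-value case split. If instead $\gamma_2>(p-q)/2$, the mass of $\boldsymbol{f}_2$ overshoots the uniform distribution, its distance from $1/2$ becomes $\gamma_2-(p-q)/2$, and for $\gamma_2$ large enough (subject to $\gamma_2<p$) the entropy ordering reverses. Thus the bound is not merely convenient but genuinely delimits the region where the conclusion holds, and I would flag this explicitly rather than treating it as a routine assumption.
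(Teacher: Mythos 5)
Your proof is correct, and it takes a genuinely different route from the paper's. The paper argues by monotonicity along the perturbation parameter: it differentiates the entropy~(\ref{eqn:ent1}) to get $\ud h/\ud\gamma=-\log\left(\frac{p+\gamma}{q-\gamma}\right)$, notes this is negative exactly when $\gamma>-(p-q)/2$, and concludes that $h$ is strictly decreasing on an interval containing both $-\gamma_2$ and $\gamma_1$ --- the hypothesis $\gamma_2\le(p-q)/2$ being exactly what places $-\gamma_2$ inside the region of decrease. You instead fold the problem about the uniform point, invoking the symmetry $H_b(a)=H_b(1-a)$ together with unimodality to reduce the claim to a comparison of the distances $|a-1/2|$; the hypothesis now enters as the condition that $p-\gamma_2$ has not crossed $1/2$, so no case split occurs in the absolute value. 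The underlying calculus fact (the sign of the derivative of binary entropy) is the same, but your organization buys two things the paper's proof does not state: the distance gap is exactly $\gamma_1+\gamma_2$, and the complementary case $\gamma_2>(p-q)/2$ is resolved sharply --- the ordering reverses precisely when $\gamma_2>\gamma_1+(p-q)$, a region that is nonempty under $\gamma_2<p$ because $\gamma_1<q$. By contrast, the paper's discussion of that case concludes only indeterminacy (and does so under an apparently unnecessary proviso $p>3q$). What the paper's derivative formulation buys in exchange is extensibility: the functional-derivative arguments for density forecasts in Section~\ref{sec:dens} (e.g.\ Proposition~\ref{prop6}) mimic it directly, whereas the fold-about-uniform trick exploits a symmetry special to the binary case and has no obvious analogue there.
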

A consequence of this proposition is that the forecast corresponding to $\gamma_1=\gamma^*$ is more informative than $\boldsymbol{f}_2$ provided $\gamma_2\le(p-q)/2$. Otherwise, either forecast could be more informative than the other. We now give the proof of this proposition.\\\\
\begin{proof}
To prove the above proposition, we consider the derivative of equation~(\ref{eqn:ent1}):
\begin{equation*}
\frac{\ud h}{\ud\gamma}=-\log\left(\frac{p+\gamma}{q-\gamma}\right).
\end{equation*}
We then note that $\ud h/\ud\gamma<0$ provided that $(p-q)>-2\gamma$. If $\gamma>0$, this inequality is trivially satisfied. On the other hand, if $\gamma<0$, then the inequality is satisfied provided $|\gamma|<(p-q)/2$. If $\gamma_2<(p-q)/2$, then $h(\gamma)$ is a strictly decreasing function for all $\gamma\in[-\gamma_2,\gamma_2]$, which implies that $h(\gamma_1)>h(\gamma_2)$. If $\gamma_2>(p-q)/2$, then $h(\gamma)$ is an increasing function for all $\gamma\in(-\gamma_2,-(p-q)/2)$ (provided $p>3q$) and strictly decreasing function in $(-(p-q)/2,\gamma_1)$, which implies that $h(-(p-q)/2)>\max\{h(\gamma_1),h(-\gamma_2)\}$. Hence, in this case, we cannot determine which of $h(\gamma_1)$ and $h(-\gamma_2)$ is lower.
\end{proof}
\subsection{The Spherical Scoring Rule}
The {\em spherical scoring rule} is given by
\begin{equation*}
S(\boldsymbol{f},j)=-\frac{f_j}{||\boldsymbol{f}||_2}.
\end{equation*}
Define $\boldsymbol{f}_-=\boldsymbol{p}-\boldsymbol{\gamma}$ and $\boldsymbol{f}_+=\boldsymbol{p}+\boldsymbol{\gamma}$. Which of the two forecasts $\boldsymbol{f}_-$ and $\boldsymbol{f_+}$ does the spherical scoring rule prefer? In order to address this question, we appeal to geometry. Considering $\boldsymbol{f}=\boldsymbol{p}+\boldsymbol{\gamma}$, the dot product rule yields
\begin{eqnarray*}
||\boldsymbol{p}||_2||\boldsymbol{f}||_2\cos\theta=\langle\boldsymbol{f},\boldsymbol{p}\rangle
\end{eqnarray*}
where $\theta$ is the angle between $\boldsymbol{f}$ and $\boldsymbol{p}$. The above formula may be rewritten as 
\begin{equation}
\cos\theta=\frac{||\boldsymbol{p}||_2^2+\langle\boldsymbol{\gamma},\boldsymbol{p}\rangle}{||\boldsymbol{p}||_2||\boldsymbol{f}||_2}.
\label{eqn:cos}
\end{equation}
We then state the following proposition:
\begin{proposition} 
\label{prop:cos}
If $\boldsymbol{p}=(p,q)$ and $\boldsymbol{\gamma}=(\gamma,-\gamma)$ and if we denote the right hand side of equation~(\ref{eqn:cos}) by $C(\gamma)$, then
\begin{equation*}
\frac{\ud C(\gamma)}{\ud\gamma}=\frac{-\gamma}{||\boldsymbol{p}||_2||\boldsymbol{f}||_2^3}.
\end{equation*}
\end{proposition}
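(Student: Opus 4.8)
The plan is to reduce $C(\gamma)$ to an explicit rational function of $\gamma$ and differentiate directly. First I would write the three ingredients in coordinates: since $\boldsymbol{p}=(p,q)$ and $\boldsymbol{\gamma}=(\gamma,-\gamma)$, I have $||\boldsymbol{p}||_2^2=p^2+q^2$ (constant in $\gamma$), $\langle\boldsymbol{\gamma},\boldsymbol{p}\rangle=\gamma(p-q)$, and $||\boldsymbol{f}||_2^2=(p+\gamma)^2+(q-\gamma)^2=||\boldsymbol{p}||_2^2+2\gamma(p-q)+2\gamma^2$. Substituting these into the right-hand side of~(\ref{eqn:cos}) presents $C(\gamma)$ as a quotient whose numerator $N(\gamma)=||\boldsymbol{p}||_2^2+\gamma(p-q)$ is linear in $\gamma$ and whose denominator is $||\boldsymbol{p}||_2\,||\boldsymbol{f}||_2$, with only $||\boldsymbol{f}||_2$ carrying $\gamma$-dependence.

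Next I would apply the quotient rule, pulling out the constant factor $||\boldsymbol{p}||_2$. The one derivative needing care is $\ud||\boldsymbol{f}||_2/\ud\gamma$, obtained by differentiating $||\boldsymbol{f}||_2^2$: from $2||\boldsymbol{f}||_2\,(\ud||\boldsymbol{f}||_2/\ud\gamma)=2(p-q)+4\gamma$ one gets $\ud||\boldsymbol{f}||_2/\ud\gamma=\bigl[(p-q)+2\gamma\bigr]/||\boldsymbol{f}||_2$. Placing all terms over the common denominator $||\boldsymbol{p}||_2\,||\boldsymbol{f}||_2^3$, the numerator becomes $(p-q)\,||\boldsymbol{f}||_2^2-N(\gamma)\bigl[(p-q)+2\gamma\bigr]$, which is the quantity I then need to simplify.

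The crux is the algebraic collapse of this numerator. Expanding both products using the explicit form of $||\boldsymbol{f}||_2^2$, the terms in $(p-q)||\boldsymbol{p}||_2^2$ cancel and the $\gamma^2(p-q)$ terms cancel, leaving $\gamma(p-q)^2-2\gamma\,||\boldsymbol{p}||_2^2=\gamma\bigl[(p-q)^2-2(p^2+q^2)\bigr]=-\gamma(p+q)^2$. Here lies the main obstacle — indeed essentially the only place where the result relies on more than bookkeeping: one must recognise that, because $(p,q)$ is a probability distribution over the two categories, the normalisation $p+q=1$ applies, so that $(p+q)^2=1$ and the numerator reduces to exactly $-\gamma$. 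Dividing by the common denominator then yields $\ud C(\gamma)/\ud\gamma=-\gamma/\bigl(||\boldsymbol{p}||_2\,||\boldsymbol{f}||_2^3\bigr)$, as claimed.
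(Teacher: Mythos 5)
Your proposal is correct and follows essentially the same route as the paper's own proof: the quotient rule, the derivative $\ud||\boldsymbol{f}||_2/\ud\gamma=\bigl[(p-q)+2\gamma\bigr]/||\boldsymbol{f}||_2$, the algebraic collapse of the numerator to $-\gamma(p+q)^2$, and the final appeal to $p+q=1$. Working in explicit coordinates rather than the paper's inner-product notation is only a cosmetic difference.
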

\begin{proof}
First note that $||\boldsymbol{f}||_2^2=||\boldsymbol{p}||_2^2+2\langle\boldsymbol{\gamma},\boldsymbol{p}\rangle+||\boldsymbol{\gamma}||_2^2$ and 
\begin{equation*}
\frac{\ud||\boldsymbol{f}||_2}{\ud\gamma}=\frac{(p-q)+2\gamma}{||\boldsymbol{f}||_2}.
\end{equation*}
Using the quotient rule, we then differentiate $C(\gamma)$ with respect to $\gamma$ to obtain
\begin{align*}
\frac{\ud C(\gamma)}{\ud\gamma}&=\frac{||\boldsymbol{f}||_2\frac{\ud}{\ud\gamma}(||\boldsymbol{p}||_2^2+\langle\boldsymbol{\gamma},\boldsymbol{p}\rangle)-(||\boldsymbol{p}||_2^2+\langle\boldsymbol{\gamma},\boldsymbol{p}\rangle)\frac{\ud}{\ud\gamma}||\boldsymbol{f}||_2}{||\boldsymbol{p}||_2||\boldsymbol{f}||_2^2}\displaybreak[0]\\
&=\frac{||\boldsymbol{f}||_2^2(p-q)-(||\boldsymbol{p}||_2^2+\langle\boldsymbol{\gamma},\boldsymbol{p}\rangle)[(p-q)+2\gamma]}{||\boldsymbol{p}||_2||\boldsymbol{f}||_2^3}\displaybreak[0]\\
&=\frac{(p-q)(||\boldsymbol{p}||_2^+2\langle\boldsymbol{\gamma},\boldsymbol{p}\rangle+||\boldsymbol{\gamma}||_2^2)-(||\boldsymbol{p}||_2^2+\langle\boldsymbol{\gamma},\boldsymbol{p}\rangle)[(p-q)+2\gamma]}{||\boldsymbol{p}||_2||\boldsymbol{f}||_2^3}\displaybreak[0]\\
&=\frac{-2\gamma||\boldsymbol{p}||_2^2+\gamma(p-q)^2}{||\boldsymbol{p}||_2||\boldsymbol{f}||_2^3}\displaybreak[0]\\
&=\frac{-2\gamma||\boldsymbol{p}||_2^2+\gamma(||\boldsymbol{p}||_2^2-2pq)}{||\boldsymbol{p}||_2||\boldsymbol{f}||_2^3}\displaybreak[0]\\
&=\frac{-\gamma(||\boldsymbol{p}||_2^2+2pq)}{||\boldsymbol{p}||_2||\boldsymbol{f}||_2^3}\displaybreak[0]\\
&=\frac{-\gamma(p+q)^2}{||\boldsymbol{p}||_2||\boldsymbol{f}||_2^3}.
\end{align*}
The desired result follows from noting that $p+q=1$.
\end{proof}
\begin{proposition}
Suppose that $p>q$ and $\gamma\in(0,q)$. Then the spherical scoring rule prefers the lower entropy forecast, $\boldsymbol{f}_+$, instead of $\boldsymbol{f}_-$. 
\end{proposition}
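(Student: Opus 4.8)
The plan is to reduce the whole comparison to the quantity $C(\gamma)=\cos\theta$ studied in Proposition~\ref{prop:cos}. First I would compute the expected spherical score of a forecast $\boldsymbol{f}=\boldsymbol{p}+\boldsymbol{\gamma}$. Since $\mathbb{E}[S(\boldsymbol{f},J)]=-\sum_j p_j f_j/||\boldsymbol{f}||_2=-\langle\boldsymbol{p},\boldsymbol{f}\rangle/||\boldsymbol{f}||_2$ and $\langle\boldsymbol{p},\boldsymbol{f}\rangle=||\boldsymbol{p}||_2||\boldsymbol{f}||_2\cos\theta$, this collapses to $\mathbb{E}[S(\boldsymbol{f},J)]=-||\boldsymbol{p}||_2\,C(\gamma)$. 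Because $||\boldsymbol{p}||_2$ does not depend on $\gamma$ and the scores are losses to be minimised, the spherical rule prefers whichever forecast has the larger value of $C$. Writing the family as $\boldsymbol{f}(\gamma)=(p+\gamma,q-\gamma)$, the forecast $\boldsymbol{f}_+$ sits at parameter $+\gamma$ and $\boldsymbol{f}_-$ at $-\gamma$, so the entire claim reduces to showing $C(\gamma)>C(-\gamma)$ for $\gamma\in(0,q)$.

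To establish this strict inequality I would set $\Phi(\gamma)=C(\gamma)-C(-\gamma)$, note $\Phi(0)=0$, and differentiate. Using Proposition~\ref{prop:cos}, which gives $C'(\gamma)=-\gamma/(||\boldsymbol{p}||_2\,||\boldsymbol{f}(\gamma)||_2^3)$, the chain rule yields
$$\Phi'(\gamma)=\frac{\gamma}{||\boldsymbol{p}||_2}\left(\frac{1}{||\boldsymbol{f}(-\gamma)||_2^3}-\frac{1}{||\boldsymbol{f}(\gamma)||_2^3}\right).$$
The sign of $\Phi'$ therefore hinges on comparing the two norms, and this is the step I expect to carry the argument: a short expansion gives $||\boldsymbol{f}(\gamma)||_2^2-||\boldsymbol{f}(-\gamma)||_2^2=4\gamma(p-q)$, which is strictly positive for $\gamma>0$ precisely because $p>q$. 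Hence $||\boldsymbol{f}(\gamma)||_2>||\boldsymbol{f}(-\gamma)||_2$, the bracket is positive, and $\Phi'(\gamma)>0$ on $(0,q)$. Combined with $\Phi(0)=0$ this gives $\Phi(\gamma)>0$, i.e. $C(\gamma)>C(-\gamma)$, so that $\mathbb{E}[S(\boldsymbol{f}_+,J)]<\mathbb{E}[S(\boldsymbol{f}_-,J)]$ and $\boldsymbol{f}_+$ is the preferred forecast.

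The main obstacle here is conceptual rather than computational. Proposition~\ref{prop:cos} shows $C'(\gamma)\propto-\gamma$, which is antisymmetric and might tempt one into concluding $C(\gamma)=C(-\gamma)$, i.e. indifference. The resolution is that the denominator $||\boldsymbol{f}(\gamma)||_2^3$ is itself asymmetric in $\gamma$ (exactly because $p\neq q$), so the excursions of $C$ to either side of its maximum at $\gamma=0$ are unequal, and the asymmetry tilts in favour of $\boldsymbol{f}_+$; making this asymmetry quantitative through the norm comparison is the crux. Finally, the entropy labelling in the statement needs no fresh work: Proposition~\ref{prop4} already records that, under $0<\gamma<q<p$, the forecast $\boldsymbol{f}_-$ has strictly higher entropy than $\boldsymbol{f}_+$, so $\boldsymbol{f}_+$ is indeed the lower-entropy forecast, completing the argument.
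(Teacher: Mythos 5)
Your proof is correct, and it improves on the paper's own argument in two respects while sharing its computational core. Both proofs reduce the claim to the inequality $C(\gamma)>C(-\gamma)$ and both extract it from Proposition~\ref{prop:cos}; the difference is in the surrounding structure. The paper links preference to $C$ by invoking \emph{effectiveness} of the spherical rule and a metric $d_*$ (chord length on the unit circle), concluding that $\boldsymbol{f}_+$ is preferred iff $\theta_+<\theta_-$; neither $d_*$ nor effectiveness is defined in the paper, so that step leans on external machinery from the Friedman reference. You bypass this entirely with the identity $\mathbb{E}[S(\boldsymbol{f},J)]=-\langle\boldsymbol{p},\boldsymbol{f}\rangle/\|\boldsymbol{f}\|_2=-\|\boldsymbol{p}\|_2\,C(\gamma)$, which makes the preference comparison immediate and the proof self-contained. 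More importantly, you supply a step the paper asserts but never justifies: the paper claims $-C'(\tau)<C'(-\tau)$ ``from Proposition~\ref{prop:cos},'' yet that proposition alone gives only the formula $C'(\gamma)=-\gamma/(\|\boldsymbol{p}\|_2\|\boldsymbol{f}(\gamma)\|_2^3)$; the inequality additionally requires $\|\boldsymbol{f}(\tau)\|_2>\|\boldsymbol{f}(-\tau)\|_2$, which is exactly your computation $\|\boldsymbol{f}(\gamma)\|_2^2-\|\boldsymbol{f}(-\gamma)\|_2^2=4\gamma(p-q)>0$. Your observation that this asymmetry of the norm is where the hypothesis $p>q$ enters, and is what breaks the temptation to read indifference off the antisymmetric numerator of $C'$, is precisely the crux; the paper's integration of $-C'(\tau)<C'(-\tau)$ over $(0,\gamma)$ and your monotonicity argument for $\Phi(\gamma)=C(\gamma)-C(-\gamma)$ are then the same calculus step in two equivalent guises. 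The appeal to Proposition~\ref{prop4} for the entropy comparison matches the paper's implicit reliance on it.
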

\begin{proof}
Since the spherical scoring rule is effective, it suffices for us to show that $d_*(\boldsymbol{f}_+,\boldsymbol{p})<d_*(\boldsymbol{f},\boldsymbol{p})$. Suppose the angles that each of $\boldsymbol{f}_+$ and $\boldsymbol{f}_-$ makes with $\boldsymbol{p}$ are respectively $\theta_+$ and $\theta_-$. It is then true that $d_*(\boldsymbol{f}_+,\boldsymbol{p})<d_*(\boldsymbol{f},\boldsymbol{p})$ if and only $\theta_+<\theta_-$ since each distance is the length of a chord on a unit circle. Note that $C(0)=1$ and $C'(0)=0$. From Proposition~\ref{prop:cos}, $-C'(\tau)<C'(-\tau)$ for all $\tau\in(0,\gamma)$, which implies that 
\begin{eqnarray*}
-\int_0^{\gamma}C'(\tau)\ud\tau<\int_0^{\gamma}C'(-\tau)\ud\tau&\Rightarrow&-\int_0^{\gamma}C'(\tau)\ud\tau<-\int_0^{-\gamma}C'(\tau)\ud\tau\\
&\Rightarrow&\int_0^{\gamma}C'(\tau)\ud\tau>\int_0^{-\gamma}C'(\tau)\ud\tau\\
&\Rightarrow&\left. C(\tau)\right|_0^{\gamma}>\left. C(\tau)\right|_0^{-\gamma}\\
&\Rightarrow&C(\gamma)-C(0)>C(-\gamma)-C(0)\\
&\Rightarrow&C(\gamma)>C(-\gamma).
\end{eqnarray*}
But  $C(\gamma)>C(-\gamma)$  implies that $\theta_+<\theta_-$.
\end{proof}
\section{Density Forecasts}
\label{sec:dens}
This section considers scoring rules for for forecasts of continuous variables. It is in some sense a generalisation of the previous section. As before, we consider how each scoring rule would rank two competing predictive distributions of fairly good quality. In the case of the logarithmic scoring rule and the Continuous Ranked Probability Score, we consider errors of each predictive distribution, $f(x)$, from the target distribution, $p(x)$, that are odd functions, i.e. $\gamma(x)=f(x)-p(x)$ with $\gamma(-x)=-\gamma(x)$.

Familiarity with the following notation and definitions will be useful. Given two functions $f(x)$ and $g(x)$ that are bounded, an {\em inner product} is defined by $$\langle f,g\rangle=\int_{-\infty}^{\infty}f(x)g(x)\ud x.$$
Then the $L_2$ norm is defined to be $||f||_2=\langle f,f\rangle^{1/2}$. 
\subsection{The Quadratic scoring rule}
A continuous counterpart of the Brier score is the quadratic score~\citep{til-07}, given by
\begin{equation*}
QS(f,X)=||f||_2^2-2f(X),
\end{equation*}
where $X$ is a random variable. Taking the expectation yields
\begin{equation}
\mathbb{E}[QS(f,X)]=||f-p||_2^2-||p||_2^2.
\label{eqn:qs1}
\end{equation}
We can now write $f(x)=p(x)+\gamma(x)$, where $\int\gamma(x)\ud x=0$, and substitute it into~(\ref{eqn:qs1}) to obtain
\begin{equation*}
\mathbb{E}[QS(f,X)]=||\gamma||_2^2-||p||_2^2
\end{equation*}
As was the case with the Brier score, the functions $\pm\gamma(x)$ yield the same quadratic score. For any two forecasts, $f_i(x)=p(x)+\gamma_i(x)$, $i=1,2$ with $||\gamma_1||_2<||\gamma_2||_2$, the quadratic scoring rule would prefer $f_1(x)$. Further more, $||\gamma_1||=||\gamma_2||$ implies that $\mathbb{E}[QS(f_1,X)]=\mathbb{E}[QS(f_2,X)]$.
\subsection{The Logarithmic scoring rule}
The expectation of the logarithmic scoring rule for the forecast is
\begin{equation*}
\mathbb{E}[\ign(f,X))]=-\int p(x)\log(p(x)+\gamma(x))\ud x.
\end{equation*}
As in the discrete case, we introduce the pdfs $f_+(x)=p(x)+\gamma(x)$, $f_-(x)=p(x)-\gamma(x)$ so that we can define $\mathbb{E}[\ign]_{\pm}=\mathbb{E}[\ign(f_+,X))]-\mathbb{E}[\ign(f_-,X))]$. It follows that
\begin{equation}
\mathbb{E}[\ign]_{\pm}=\int p(x)\log\left(\frac{p(x)-\gamma(x)}{p(x)+\gamma(x)}\right)\ud x.
\label{eqn:ign4}
\end{equation}
It is necessary that $|\gamma(x)|\le p(x)$ for~(\ref{eqn:ign4}) to be well defined. Consider the case when $p(x)=p(-x)$. If, in addition, $\gamma(x)$ is an odd function, i.e. $\gamma(-x)=-\gamma(x)$, then equation~(\ref{eqn:ign4}) yields $\mathbb{E}[\ign]_{\pm}=0.$

%We now turn to the general case where $p(x)$ is not necessarily even. If we let $\varphi(x)$ be a test function, then the functional derivative of $\mathbb{E}[\mbox{IGN}]_{\pm}$, denoted by $\delta\mathbb{E}[\mbox{IGN}]_{\pm}/\delta\gamma$, satisfies the relation
%\begin{equation*}
%\left\langle\frac{\delta}{\delta\gamma}\mathbb{E}[\mbox{IGN}]_{\pm},\varphi\right\rangle=\left.\frac{\ud}{\ud\varepsilon}\int p(x)\log\left(\frac{p(x)-\gamma(x)-\varepsilon\varphi(x)}{p(x)+\gamma(x)+\varepsilon\varphi(x)}\right)\ud x\right|_{\varepsilon=0},
%\end{equation*}
%from which it follows that
%\begin{equation}
%\frac{\delta}{\delta\gamma}\mathbb{E}[\mbox{IGN}]_{\pm}=\frac{-2p^2(x)}{p^2(x)-\gamma^2(x)}.
%\label{eqn:ign10}
%\end{equation}
%Hence, the functional derivative is negative for all $\gamma(x)$ such that $|\gamma(x)|<p(x)$, which implies that $\mathbb{E}[\mbox{IGN}]_{\pm}$ is a decreasing functional of $|\gamma(x)|$.

When $\gamma(-x)=-\gamma(x)$ and $\int_{\infty}^0p(x)\ud x>0.5$, we state the following proposition:
\begin{proposition}
\label{prop5}
Given that $\gamma(-x)=-\gamma(x)$ with $\gamma(|x|)<0$ and $p(|x|)\le p(x)$, then $\mathbb{E}[\ign]_{\pm}\ge 0$. 
\end{proposition}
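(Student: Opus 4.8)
The plan is to exploit the oddness of $\gamma$ to fold the defining integral~(\ref{eqn:ign4}) onto the positive half-line, turning it into a single integral whose integrand compares $p$ at $x$ against $p$ at $-x$. First I would split
$\mathbb{E}[\ign]_{\pm}=\int_{-\infty}^{\infty}p(x)\log\frac{p(x)-\gamma(x)}{p(x)+\gamma(x)}\,\ud x$
at the origin and substitute $x\mapsto -x$ in the integral over $(-\infty,0)$. Using $\gamma(-x)=-\gamma(x)$, the negative-axis contribution becomes $\int_0^{\infty}p(-x)\log\frac{p(-x)+\gamma(x)}{p(-x)-\gamma(x)}\,\ud x$, so that
\begin{equation*}
\mathbb{E}[\ign]_{\pm}=\int_0^{\infty}\left[p(x)\log\frac{p(x)-\gamma(x)}{p(x)+\gamma(x)}+p(-x)\log\frac{p(-x)+\gamma(x)}{p(-x)-\gamma(x)}\right]\ud x.
\end{equation*}

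Next, since $\gamma(|x|)<0$ I would write $\gamma(x)=-\delta(x)$ with $\delta(x)>0$ on $(0,\infty)$, and for each fixed $\delta$ introduce the auxiliary function $\phi(t)=t\log\frac{t+\delta}{t-\delta}$, defined for $t>\delta$. With this notation the integrand above is exactly $\phi(p(x))-\phi(p(-x))$. The standing hypothesis $|\gamma|\le p$ guarantees $p(\pm x)>\delta$, so $\phi$ is evaluated inside its domain, while the hypothesis $p(|x|)\le p(x)$ specialises on the positive axis to $p(x)\le p(-x)$.

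The heart of the argument, and the step I expect to be the main obstacle, is to show that $\phi$ is strictly decreasing on $(\delta,\infty)$; granted this, $p(x)\le p(-x)$ immediately yields $\phi(p(x))\ge\phi(p(-x))$, the integrand is pointwise nonnegative, and $\mathbb{E}[\ign]_{\pm}\ge 0$ follows by integration. To establish monotonicity I would differentiate,
\begin{equation*}
\phi'(t)=\log\frac{t+\delta}{t-\delta}-\frac{2t\delta}{t^2-\delta^2},
\end{equation*}
put $u=\delta/t\in(0,1)$, and compare the two terms through the power series $\log\frac{1+u}{1-u}=2\sum_{k\ge 0}\frac{u^{2k+1}}{2k+1}$ and $\frac{2u}{1-u^2}=2\sum_{k\ge 0}u^{2k+1}$. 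The $k=0$ terms cancel, and since $\frac{1}{2k+1}<1$ for every $k\ge 1$ the second series dominates termwise, giving $\phi'(t)<0$. The remaining bookkeeping, namely integrability and verifying the domain conditions throughout, is routine, so the crux reduces precisely to the elementary inequality $\log\frac{1+u}{1-u}<\frac{2u}{1-u^2}$ on $(0,1)$.
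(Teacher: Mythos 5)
Your proof is correct, and its overall skeleton coincides with the paper's: you fold the integral onto a half-line using the oddness of $\gamma$ (onto $(0,\infty)$ rather than the paper's $(-\infty,0)$, a mirror-image difference only), and you reduce the claim to the same key lemma --- monotonicity of $\phi(t)=t\log\frac{t+\delta}{t-\delta}$, which is exactly the paper's $\Phi$ --- so that $p(|x|)\le p(x)$ yields pointwise nonnegativity of the folded integrand. Where you genuinely depart is in the proof of that lemma. The paper writes $\Phi'=W-Y$ with $W(p)=\log\frac{p+\gamma}{p-\gamma}$ and $Y(p)=\frac{2p\gamma}{p^2-\gamma^2}$, and argues in two regimes: on $(\gamma,2\gamma]$ it evaluates both functions at $p=2\gamma$ and propagates the inequality backwards using $Y'<W'$; on $[2\gamma,\infty)$ it shows $W$ and $Y$ have the same limit at infinity (via L'Hopital) and integrates the derivative comparison over $[p,\infty)$. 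Your substitution $u=\delta/t$ and the termwise comparison of $\log\frac{1+u}{1-u}=2\sum_{k\ge0}\frac{u^{2k+1}}{2k+1}$ against $\frac{2u}{1-u^2}=2\sum_{k\ge0}u^{2k+1}$ prove the same inequality on all of $(\delta,\infty)$ in one stroke: no case split, no asymptotics, and strictness for free. It is more elementary and also more robust --- incidentally, the paper's evaluation $W(2\gamma)=\log 2$ is a slip (the value is $\log 3$), harmless because $\log 3<4/3$, but it is the kind of error your uniform argument cannot commit. The one loose end, shared with the paper, is that finiteness of the logarithms really requires the strict bound $|\gamma(x)|<p(\pm x)$ rather than the stated $|\gamma(x)|\le p(x)$; that is a matter of hypothesis bookkeeping, not a flaw in your argument.
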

 The above proposition gives conditions under which the forecast $f_-(x)$ is preferred by the logarithmic scoring rule over $f_+(x)$.

\begin{proof}
The proof proceeds as follows:
\begin{eqnarray*}
\mathbb{E}[\ign]_{\pm}&=&\int_{-\infty}^{\infty} p(x)\log\left(\frac{p(x)-\gamma(x)}{p(x)+\gamma(x)}\right)\ud x\\
&=&\int_{-\infty}^0 p(x)\log\left(\frac{p(x)-\gamma(x)}{p(x)+\gamma(x)}\right)\ud x+\int_0^{\infty} p(x)\log\left(\frac{p(x)-\gamma(x)}{p(x)+\gamma(x)}\right)\ud x
\end{eqnarray*}
If we now perform a change of variable $u=-x$ in the right hand integral and then replace $u$ by $x$, we obtain
\begin{eqnarray*}
\mathbb{E}[\ign]_{\pm}&=&\int_{-\infty}^0 p(x)\log\left(\frac{p(x)-\gamma(x)}{p(x)+\gamma(x)}\right)\ud x-\int_0^{-\infty} p(-x)\log\left(\frac{p(-x)-\gamma(-x)}{p(-x)+\gamma(-x)}\right)\ud x\\
&=&\int_{-\infty}^0 p(x)\log\left(\frac{p(x)-\gamma(x)}{p(x)+\gamma(x)}\right)\ud x+\int_{-\infty}^0 p(-x)\log\left(\frac{p(-x)+\gamma(x)}{p(-x)-\gamma(x)}\right)\ud x\\
&\ge&\int_{-\infty}^0 p(x)\log\left(\frac{p(x)-\gamma(x)}{p(x)+\gamma(x)}\right)\ud x+\int_{-\infty}^0 p(x)\log\left(\frac{p(x)+\gamma(x)}{p(x)-\gamma(x)}\right)\ud x=0,
\end{eqnarray*}
where we used $p(|x|)\le p(x)$ to obtain the last inequality. To justify the use of this inequality, we need to show that the function
\begin{equation*}
\Phi(p)=p\log\left(\frac{p+\gamma}{p-\gamma}\right)
\end{equation*}
is a decreasing function for $\gamma\in(0,p)$. Differentiating $\Phi$ with respect to $p$ yields
\begin{equation*}
\Phi'(p)=\log\left(\frac{p+\gamma}{p-\gamma}\right)-\frac{2p\gamma}{p^2-\gamma^2}.
\end{equation*}
It now suffices to show that $\Phi'(p)<0$ for all $p$. Let us introduce the notation $W(p)=\log[(p+\gamma)/(p-\gamma)]$ and $Y(p)=2p\gamma/(p^2-\gamma^2)$ so that $\Phi'(p)=W(p)-Y(p)$. Note that $W(2\gamma)=\log 2$ and $Y(2\gamma)=4/3=\log(e^{4/3})$. Hence $W(2\gamma)<Y(2\gamma)$, which implies that $\Phi'(2\gamma)<0$. Differentiating $W(p)$ and $Y(p)$ with respect to $p$ yields 
\begin{equation*}
W'(p)=\frac{-2\gamma}{p^2-\gamma^2}\quad\mbox{and}\quad Y'(p)=\frac{-2\gamma(p^2+\gamma^2)}{(p^2-\gamma^2)^2}.
\end{equation*}
It is now clear that $W'(p)<0$ and $Y'(p)<0$ for all $p$. Further more, $Y'(p)<W'(p)$. Hence $W(p)<Y(p)$ for all $p\in(\gamma,2\gamma]$, which implies that $\Phi'(p)<0$ for all $p\in(\gamma,2\gamma]$. It now remains to be shown that $\Phi'(p)<0$ for all $p\in(2\gamma,\infty)$. It suffices to consider the asymptotic behaviour as $p\rightarrow\infty$. Applying L'Hopital's rule, we obtain 
\begin{equation*}
\lim_{p\rightarrow\infty}\frac{|W(p)|}{|Y(p)|}=\lim_{p\rightarrow\infty}\frac{|W'(p)|}{|Y'(p)|}=\lim_{p\rightarrow\infty}\frac{p^2+\gamma^2}{p^2-\gamma^2}=1.
\end{equation*}
Hence, $\lim_{p\rightarrow\infty}W(p)=\lim_{p\rightarrow\infty}Y(p)$, i.e. $W(\infty)=Y(\infty)$. With this result in mind, for all $p\in[2\gamma,\infty)$, we have
\begin{align*}
\int_p^{\infty}Y'(\tau)\ud\tau<\int_p^{\infty}W'(\tau)\ud\tau&\Rightarrow \left. Y(\tau)\right|_p^{\infty}<\left. W(\tau)\right|_p^{\infty}\\
&\Rightarrow Y(\infty)-Y(p)<W(\infty)-W(p)\\
&\Rightarrow W(p)<Y(p),
\end{align*}
 which completes the proof. The condition $p(|x|)<p(x)$ implies that $\int_{-\infty}^0p(x)\ud x\ge0.5$. It corresponds to the case $p>q$ in the discrete case.
\end{proof}

We now want to compare the entropies of the forecasts $f(x)=p(x)\pm\gamma(x)$ when $\gamma(-x)=-\gamma(x)$ and $\gamma(|x|)\le\gamma(x)$. The entropy of the function $f(x)=p(x)+\gamma(x)$ is then given by
\begin{equation}
h(\gamma)=-\int (p(x)+\gamma(x))\log(p(x)+\gamma(x))\ud x.
\label{eqn:ign7}
\end{equation} 
The functional derivative of $h(\gamma)$ with respect to $\gamma$ is then given by
\begin{eqnarray}
\nonumber
 \frac{\delta h(\gamma)}{\delta\gamma(x)}&=&-\frac{\partial}{\partial\gamma(x)}\left\{(p(x)+\gamma(x))\log(p(x)+\gamma(x))\right\}\\
&=&-[\log(p+\gamma)+1].
\label{eqn:ign8}
\end{eqnarray}
The order $O(\varepsilon)$ part of $h(\gamma+\varepsilon\delta\gamma)-h(\gamma)$ is given by~(see~\cite{stone} for further insights)
\begin{eqnarray}
\delta h(\gamma)&=&\int_{-\infty}^{\infty}\frac{\delta h(\gamma)}{\delta\gamma(x)}\delta\gamma(x)\ud x.
\label{eqn:ign9}
\end{eqnarray}
Plugging~(\ref{eqn:ign8}) into~(\ref{eqn:ign9}) yields
\begin{align*}
\delta h(\gamma)&=-\int_{-\infty}^{\infty}[\log(p(x)+\gamma(x))+1]\delta\gamma(x)\ud x\displaybreak[0]\\
&=-\int_{-\infty}^0[\log(p(x)+\gamma(x))+1]\delta\gamma(x)\ud x-\int_0^{\infty}[\log(p(x)+\gamma(x))+1]\delta\gamma(x)\ud x\displaybreak[0]\\
&=-\int_{-\infty}^0[\log(p(x)+\gamma(x))+1]\delta\gamma(x)\ud x+\int_0^{-\infty}[\log(p(-x)+\gamma(-x))+1]\delta\gamma(-x)\ud x\displaybreak[0]\\
&=-\int_{-\infty}^0[\log(p(x)+\gamma(x))+1]\delta\gamma(x)\ud x-\int_{-\infty}^{0}[\log(p(-x)+\gamma(-x))+1]\delta\gamma(-x)\ud x\displaybreak[0]\\
&=-\int_{-\infty}^0[\log(p(x)+\gamma(x))+1]\delta\gamma(x)\ud x+\int_{-\infty}^{0}[\log(p(-x)-\gamma(x))+1]\delta\gamma(x)\ud x\displaybreak[0]\\
&=-\int_{-\infty}^0\log\left(\frac{p(x)+\gamma(x)}{p(-x)-\gamma(x)}\right)\delta\gamma(x)\ud x,
\end{align*}
where we have applied a change of variable $x\rightarrow-x$ in the second integral of the third line and assumed $\delta\gamma(-x)=-\delta\gamma(x)$ in the fifth line. In particular,
\begin{eqnarray*}
\delta h(\gamma)|_{\gamma=0}&=&-\int_{-\infty}^0\log\left(\frac{p(x)}{p(-x)}\right)\delta\gamma(x)\ud x.
\end{eqnarray*}
Using the assumption that $p(x)\ge p(-x)$ whenever $x<0$, we consequently obtain
\begin{equation}
\delta h(\gamma)|_{\gamma=0}\le 0,
\end{equation}
if $\delta\gamma(x)>0$ for all $x<0$. In effect, we have just proved the following proposition:
\begin{proposition}
\label{prop6}
Given that $\gamma(-x)=-\gamma(x)$, $\int\gamma(x)\ud x=0$, $\gamma(|x|)\le 0$, $p(|x|)\le p(x)$ and $|\gamma(x)|<p(x)$, then the entropy of the forecast density $f_+(x)=p(x)+\gamma(x)$ is lower than that of the forecast density $f_-(x)=p(x)-\gamma(x)$.
\end{proposition}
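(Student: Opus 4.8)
The plan is to prove the stronger quantitative fact that the entropy difference $D = h(\gamma)-h(-\gamma)$ is non-positive, by recasting it as a single integral over $(-\infty,0)$ with a sign-definite integrand, in the same spirit as the folding argument used in the proof of Proposition~\ref{prop5}. Writing $\psi(a)=a\log a$, I would begin from
$$D = \int_{-\infty}^{\infty}[\psi(p(x)-\gamma(x)) - \psi(p(x)+\gamma(x))]\ud x,$$
which is well defined because the hypothesis $|\gamma(x)|<p(x)$ keeps both arguments of $\psi$ strictly positive.

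First I would split the integral at the origin and, in the piece over $(0,\infty)$, substitute $x\to -x$ and invoke the oddness $\gamma(-x)=-\gamma(x)$ to bring everything onto $(-\infty,0)$. Writing $a=p(x)$, $b=p(-x)$ and $c=\gamma(x)$, and noting that $c=\gamma(x)\ge 0$ for $x<0$ (since $\gamma(|x|)\le 0$), the combined integrand collapses to $g(a)-g(b)$, where
$$g(s) = \psi(s-c) - \psi(s+c).$$

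The crux is then to show that $g$ is decreasing. Since $\psi'(s)=\log s+1$, one computes
$$g'(s) = \log(s-c)-\log(s+c) = \log\frac{s-c}{s+c} < 0 \quad\text{for } c>0,$$
so $g$ is strictly decreasing. The hypothesis $p(|x|)\le p(x)$ yields $a=p(x)\ge p(-x)=b$ for $x<0$, hence $g(a)\le g(b)$ pointwise and therefore $D\le 0$; the inequality is strict wherever $c>0$ and $a>b$ on a set of positive measure, giving $h(\gamma)<h(-\gamma)$, i.e.\ $f_+$ has the lower entropy, as claimed.

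I expect the main obstacle to be the bookkeeping in the folding step, namely tracking the sign changes produced by the substitution $x\to -x$ together with $\gamma(-x)=-\gamma(x)$ so that the two half-line contributions assemble into the clean difference $g(a)-g(b)$. A secondary subtlety worth flagging is that this global route is genuinely needed: the first-variation computation displayed above only certifies that the entropy decreases to first order at the symmetric point $\gamma\equiv 0$, and on its own it does not settle the comparison for finite $\gamma$, whereas the present argument controls the sign of $D$ directly.
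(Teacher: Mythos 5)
Your proof is correct, and it takes a genuinely different route from the paper's. The paper argues variationally: it computes the functional derivative $\delta h(\gamma)/\delta\gamma(x)=-[\log(p+\gamma)+1]$, folds the first variation onto $(-\infty,0)$ using oddness of $\delta\gamma$, and then evaluates it at $\gamma=0$, getting $\delta h(\gamma)|_{\gamma=0}=-\int_{-\infty}^0\log\left(p(x)/p(-x)\right)\delta\gamma(x)\,\ud x\le 0$ from $p(x)\ge p(-x)$ for $x<0$. You share the folding step, but you replace the infinitesimal comparison with a finite one: the difference $D=h(\gamma)-h(-\gamma)$ collapses to $\int_{-\infty}^0[g(a)-g(b)]\,\ud x$ with $a=p(x)$, $b=p(-x)$, $c=\gamma(x)\ge0$ and $g(s)=\psi(s-c)-\psi(s+c)$, and your key lemma is the monotonicity $g'(s)=\log[(s-c)/(s+c)]<0$, which makes the integrand sign-definite. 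What your route buys is completeness: as you correctly flag, the paper's computation only certifies a first-order decrease of entropy at the symmetric point $\gamma\equiv0$; to upgrade it to the stated comparison of $f_+$ and $f_-$ one would have to integrate the variation along a path from $-\gamma$ to $\gamma$ and control the sign of $\log\{[p(x)+\gamma_t(x)]/[p(-x)-\gamma_t(x)]\}$ at every intermediate $\gamma_t$, which is not automatic on the half of the path where $\gamma_t(x)\le0$ for $x<0$ and needs an additional symmetrization (pairing $t$ with $-t$). Your monotonicity lemma is precisely the integrated form of that derivative information, so it settles the finite-$\gamma$ claim directly with elementary calculus; what the paper's route buys is uniformity of technique, since the same first-variation machinery is reused in the two propositions that follow. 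One caveat applies to both arguments equally: they deliver $h(\gamma)\le h(-\gamma)$ rather than the strict inequality asserted in the proposition (for instance, both give $D=0$ when $p$ is symmetric), and strictness requires $c>0$ and $a>b$ simultaneously on a set of positive measure, exactly the condition your closing remark identifies.
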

Propositions~\ref{prop5} and~\ref{prop6} imply that the logarithmic scoring rule prefers the forecast density that is less informative, which is in agreement with the categorical case considered in the previous section.
\begin{proposition}
Given two forecasts $f_i(x)=p(x)+\gamma_i(x)$, $i=1,2$, with (i) $|\gamma_1(x)|<|\gamma_2(x)|$, (ii) $\gamma_i(|x|)\le0$, (iii) $\gamma_i(-x)=-\gamma_i(x)$, (iv) $|\gamma_i(x)|\le p(x)$ and (v) $p(|x|)\le p(x)$, then the logarithmic scoring rule prefers forecast $f_1(x)$ over forecast $f_2(x)$.
\end{proposition}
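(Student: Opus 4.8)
The plan is to show directly that $\mathbb{E}[\ign(f_2,X)]-\mathbb{E}[\ign(f_1,X)]\ge0$, since the logarithmic rule prefers $f_1$ exactly when this difference is nonnegative (the forecast with the smaller expected score being the preferred one). Writing the expected logarithmic score as $\mathbb{E}[\ign(f,X)]=-\int p(x)\log(p(x)+\gamma(x))\,\ud x$, the difference is
\[
\mathbb{E}[\ign(f_2,X)]-\mathbb{E}[\ign(f_1,X)]=\int_{-\infty}^{\infty}p(x)\log\left(\frac{p(x)+\gamma_1(x)}{p(x)+\gamma_2(x)}\right)\ud x ,
\]
so the whole task reduces to signing this integral. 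This is the continuous counterpart of Proposition~\ref{prop2}, and I would prove it by the same symmetrisation device used in the proofs of Propositions~\ref{prop5} and~\ref{prop6}.

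First I would fold the integral onto the negative half-line. Splitting at the origin, substituting $x\mapsto-x$ on $[0,\infty)$, and invoking the oddness $\gamma_i(-x)=-\gamma_i(x)$ gives
\[
\mathbb{E}[\ign(f_2,X)]-\mathbb{E}[\ign(f_1,X)]=\int_{-\infty}^{0}\left[p(x)\log\frac{p(x)+\gamma_1(x)}{p(x)+\gamma_2(x)}+p(-x)\log\frac{p(-x)-\gamma_1(x)}{p(-x)-\gamma_2(x)}\right]\ud x .
\]
For $x<0$ conditions~(ii) and~(iii) give $0\le\gamma_1(x)<\gamma_2(x)$, while~(iv) gives $\gamma_2(x)\le\min\{p(x),p(-x)\}$. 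Abbreviating $a=p(x)$, $b=p(-x)$, $c_1=\gamma_1(x)$, $c_2=\gamma_2(x)$, the bracketed integrand becomes $b\log\frac{b-c_1}{b-c_2}-a\log\frac{a+c_2}{a+c_1}$, and rewriting each logarithm as $\log\frac{a+c_2}{a+c_1}=\int_{c_1}^{c_2}\frac{\ud t}{a+t}$ and $\log\frac{b-c_1}{b-c_2}=\int_{c_1}^{c_2}\frac{\ud t}{b-t}$ collapses it to
\[
\int_{c_1}^{c_2}\left(\frac{b}{b-t}-\frac{a}{a+t}\right)\ud t .
\]
Since $t\ge0$ forces $\tfrac{b}{b-t}\ge1\ge\tfrac{a}{a+t}$, the integrand is nonnegative, so the bracket is nonnegative pointwise and hence the full integral is $\ge0$; strictness follows because $\gamma_1(x)<\gamma_2(x)$ on a set of positive measure by hypothesis~(i). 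This establishes $\mathbb{E}[\ign(f_1,X)]<\mathbb{E}[\ign(f_2,X)]$.

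I expect the only real work to be the bookkeeping in the folding step — correctly tracking the sign changes produced by the oddness of the $\gamma_i$ and the reflection $x\mapsto-x$, exactly as in Proposition~\ref{prop5} — together with checking that~(iv) keeps every denominator positive so that the integral representation of the logarithms is legitimate and the integrals converge. The inequality $\tfrac{b}{b-t}\ge1\ge\tfrac{a}{a+t}$ that does the signing is elementary and, notably, needs only $t\ge0$ rather than any ordering of $a$ and $b$; the hypothesis $p(|x|)\le p(x)$ in~(v) merely aligns this proposition with the over-/under-confidence setup of the categorical case and is not where the estimate bites. An alternative, equivalent route is to interpolate by $\gamma_s=\gamma_1+s(\gamma_2-\gamma_1)$ and show $\tfrac{\ud}{\ud s}\mathbb{E}[\ign(f_s,X)]>0$, which reproduces the same bracket with $t=\gamma_s(x)$ and makes the link to the monotonicity argument of Proposition~\ref{prop2} explicit.
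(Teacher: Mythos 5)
Your proof is correct, and it differs from the paper's in execution though not in skeleton. The paper also folds everything onto $(-\infty,0)$ using the oddness of the perturbation, but it works infinitesimally: it computes the functional derivative $\delta\mathbb{E}[\ign]/\delta\gamma(x)=-p(x)/(p(x)+\gamma(x))$, shows that the first variation under an odd perturbation with $\delta\gamma(x)>0$ on $x<0$ equals
\[
\int_{-\infty}^{0}\frac{[p(-x)+p(x)]\,\gamma(x)}{[p(-x)-\gamma(x)][p(x)+\gamma(x)]}\,\delta\gamma(x)\,\ud x\ \ge\ 0,
\]
and then passes from $\gamma_1$ to $\gamma_2$ by ``successive additions'' of $\delta\gamma$ --- an informal homotopy argument whose integration is left implicit. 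Your device of writing each logarithm as $\int_{c_1}^{c_2}\ud t/(a+t)$ and $\int_{c_1}^{c_2}\ud t/(b-t)$ is exactly that homotopy integrated out in closed form; indeed your own closing remark about interpolating via $\gamma_s=\gamma_1+s(\gamma_2-\gamma_1)$ describes, in substance, the paper's actual proof. What your route buys is rigor and economy: the finite-difference inequality is established in a single pointwise computation, with no functional derivatives and no limiting step left to the reader. Your side observation is also correct and applies equally to the paper's proof: hypothesis (v), $p(|x|)\le p(x)$, is never used --- the paper's folded integrand above is signed by (ii)--(iv) alone, just as your bracket is; (v) is what drives the neighbouring results (Propositions~\ref{prop5} and~\ref{prop6}), not this one. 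The only caveat, common to both arguments, is that (iv) permits $|\gamma_2(x)|=p(x)$, in which case a denominator ($b-c_2$ in your notation) can vanish; the preference conclusion survives, since the bracket is then $+\infty$, but one should either assume strict inequality in (iv) or remark that $\mathbb{E}[\ign(f_2,X)]=+\infty$ in that case.
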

\begin{proof}
To prove the above proposition, we consider the functional derivative of the expected logarithmic scoring rule, $\mathbb{E}[\ign]=-\int_{-\infty}^{\infty}p(x)\log(p(x)+\gamma(x))\ud x$. The functional derivative with respect to $\gamma(x)$ is
\begin{equation*}
\frac{\delta}{\delta\gamma}\mathbb{E}[\ign]=\frac{-p(x)}{p(x)+\gamma(x)}.
\end{equation*}
Using this result, we obtain the first variation of $\mathbb{E}[\ign]$ as
\begin{align*}
\delta\mathbb{E}[\ign]&=\int_{-\infty}^{\infty}\frac{\delta\mathbb{E}[\ign]}{\delta\gamma(x)}\delta\gamma(x)\ud x\displaybreak[0]\\
&=\int_{-\infty}^{\infty}\frac{-p(x)}{p(x)+\gamma(x)}\delta\gamma(x)\ud x\displaybreak[0]\\
&=\int_{-\infty}^{0}\frac{-p(x)}{p(x)+\gamma(x)}\delta\gamma(x)\ud x+\int_{0}^{\infty}\frac{-p(x)}{p(x)+\gamma(x)}\delta\gamma(x)\ud x\displaybreak[0]\\
&=\int_{-\infty}^{0}\frac{-p(x)}{p(x)+\gamma(x)}\delta\gamma(x)\ud x+\int_{0}^{-\infty}\frac{p(-x)}{p(-x)+\gamma(-x)}\delta\gamma(-x)\ud x\displaybreak[0]\\
&=\int_{-\infty}^{0}\frac{-p(x)}{p(x)+\gamma(x)}\delta\gamma(x)\ud x+\int_{-\infty}^{0}\frac{p(-x)}{p(-x)-\gamma(x)}\delta\gamma(x)\ud x\displaybreak[0]\\
&=\int_{-\infty}^{0}\left[\frac{p(-x)}{p(-x)-\gamma(x)}-\frac{p(x)}{p(x)+\gamma(x)}\right]\delta\gamma(x)\ud x\displaybreak[0]\\
&=\int_{-\infty}^{0}\frac{[p(-x)+p(x)]\gamma(x)}{[p(-x)-\gamma(x)][p(x)+\gamma(x)]}\delta\gamma(x)\ud x\displaybreak[0]\\
&\ge0,
\end{align*}
provided $\delta\gamma(x)>0$ for all $x<0$, $\delta\gamma(-x)=-\delta\gamma(x)$,  $\gamma(-x)=-\gamma(x)$ and $\gamma(|x|)\le 0$. What has been shown is that as $\gamma(x)$ changes by $\delta\gamma(x)$, the expected logarithmic score changes by a positive amount. In particular, if we start at $\gamma(x)=\gamma_1(x)$, and progressively move towards $\gamma(x)=\gamma_2(x)$ by making successive additions of $\delta\gamma(x)$, the expected logarithmic score can only increase. Hence the expected logarithmic score of $\gamma_2(x)$ will be higher than that of $\gamma_2(x)$, which yields the result.
\end{proof}

We shall now consider two forecasts, $f_1(x)=p(x)+\gamma_1(x)$ and $f_2(x)=p(x)-\gamma_2(x)$ with $|\gamma_1(x)|\le|\gamma_2(x)|\le p(x)$. In this case, the quadratic scoring rule would prefer $f_1(x)$ over $f_2(x)$. In order to determine which forecast the logarithmic scoring would prefer, we consider the functional
\begin{equation}
\mathcal{H}(\gamma_1,\gamma_2)=\int_{-\infty}^{\infty}p(x)\log\left(\frac{p(x)-\gamma_2(x)}{p(x)+\gamma_1(x)}\right)\ud x.
\end{equation}
Then the following proposition holds
\begin{proposition}
Given that $|\gamma_1(x)|\le|\gamma_2(x)|\le p(x)$ and $\gamma_i(-x)=-\gamma_i(x)$, $i=1,2$, there exists $\gamma*(x)$ satisfying the inequalities $\gamma^*(x)\gamma_2(x)\ge0$ and $|\gamma^*(x)|\le|\gamma_2(x)|$ such that (a) $\mathcal{H}(\gamma^*,\gamma_2)=0$, (b) $\mathcal{H}(\gamma_1,\gamma_2)>0$ for $|\gamma^*|<|\gamma_1|$ and (c) $\mathcal{H}(\gamma_1,\gamma_2)<0$ for $|\gamma^*|>|\gamma_1|$.
\end{proposition}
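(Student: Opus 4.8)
The plan is to exploit the identity $\mathcal{H}(\gamma_1,\gamma_2)=\mathbb{E}[\ign(f_1,X)]-\mathbb{E}[\ign(f_2,X)]$, so that $\mathcal{H}<0$ holds precisely when the logarithmic scoring rule prefers $f_1$. Since the subtrahend $\mathbb{E}[\ign(f_2,X)]$ does not depend on $\gamma_1$, studying the sign of $\mathcal{H}$ as $\gamma_1$ varies is the same as studying how $\mathbb{E}[\ign(f_1,X)]$ responds to changes in $\gamma_1$, which is exactly the first-variation computation carried out in the preceding proposition. I would therefore treat $\gamma_2$ as fixed and regard $\mathcal{H}(\cdot,\gamma_2)$ as a functional of $\gamma_1$, mirroring the discrete Proposition~\ref{prop1}: establish monotonicity in the magnitude of $\gamma_1$, pin down the signs at the two endpoints $\gamma_1=0$ and $\gamma_1=\gamma_2$, and then invoke the intermediate value theorem.

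Concretely, I would parametrise the one-parameter family $\gamma_1=t\gamma_2$ for $t\in[0,1]$ and set $\phi(t)=\mathcal{H}(t\gamma_2,\gamma_2)$. At the lower endpoint, $\phi(0)=\int p(x)\log\left(\frac{p(x)-\gamma_2(x)}{p(x)}\right)\ud x=-\int p\log(p/f_2)\ud x$, which is $\le 0$ by Gibbs' inequality (equivalently, by strict properness of $\ign$, with strict inequality whenever $f_2\neq p$). At the upper endpoint, $\phi(1)=\int p(x)\log\left(\frac{p(x)-\gamma_2(x)}{p(x)+\gamma_2(x)}\right)\ud x$ is simply $\mathbb{E}[\ign]_{\pm}$ evaluated at $\gamma=\gamma_2$, so $\phi(1)\ge 0$ by Proposition~\ref{prop5}. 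For monotonicity I would differentiate $\phi$, split the integral at the origin, substitute $x\mapsto -x$ on the positive half-line and use $\gamma_2(-x)=-\gamma_2(x)$ exactly as in the first-variation reductions above; this collapses $\phi'(t)$ to a single integral over $(-\infty,0]$ whose integrand carries a factor $t\,\gamma_2(x)^2\,[p(x)+p(-x)]\ge 0$, so $\phi$ is nondecreasing (strictly increasing wherever $\gamma_2\not\equiv 0$). Continuity of $\phi$ together with $\phi(0)<0\le\phi(1)$ then yields a $t^*\in(0,1]$ with $\phi(t^*)=0$; setting $\gamma^*=t^*\gamma_2$ gives $\gamma^*\gamma_2=t^*\gamma_2^2\ge 0$ and $|\gamma^*|=t^*|\gamma_2|\le|\gamma_2|$, as required in part~(a), while the monotonicity of $\phi$ immediately delivers parts~(b) and~(c).

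The main obstacle, I expect, is not the intermediate value step but the book-keeping that makes the monotonicity honest. Two points need care. First, the integrand of $\phi'$ must be well defined, i.e. the denominators $p(x)+t\gamma_2(x)$ and $p(-x)-t\gamma_2(x)$ must stay positive on $(-\infty,0]$; here I would use the oddness of $\gamma_2$ to upgrade the hypothesis $|\gamma_2(x)|\le p(x)$ to $|\gamma_2(x)|=|\gamma_2(-x)|\le p(-x)$, so that both denominators are bounded below by $p\mp t|\gamma_2|\ge 0$. Second, and more substantively, parts~(b) and~(c) as stated compare an arbitrary admissible $\gamma_1$ to $\gamma^*$ through the pointwise magnitude ordering $|\gamma^*(x)|\lessgtr|\gamma_1(x)|$, whereas the argument above only moves $\gamma_1$ along the ray $t\gamma_2$. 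To cover the general case I would instead read the first variation $\delta_{\gamma_1}\mathcal{H}$ directly: the same half-line reduction shows its integrand is proportional to $\gamma_1(x)\,\delta\gamma_1(x)$ divided by positive denominators, which is nonnegative whenever an increment $\delta\gamma_1$ increases $|\gamma_1(x)|$ without flipping its sign, so $\mathcal{H}$ is monotone under pointwise growth of $|\gamma_1|$ at fixed sign pattern. The delicate part is that this monotonicity, and the endpoint sign $\phi(1)\ge0$, both genuinely rely on the skew conditions $p(|x|)\le p(x)$ and $\gamma_2(|x|)\le0$ inherited from Proposition~\ref{prop5}; I would state these explicitly, since without them $\phi(1)$ can be negative and no admissible $\gamma^*$ need exist.
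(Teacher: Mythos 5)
Your proof is correct, and its skeleton matches the paper's --- fix the sign of $\mathcal{H}$ at the two endpoints $\gamma_1=0$ and $\gamma_1=\gamma_2$, then apply an intermediate value argument --- but it differs in three substantive ways, each an improvement. First, at $\gamma_1=0$ you get $\mathcal{H}(0,\gamma_2)=-\int p\log(p/f_2)\,\ud x\le0$ (strict when $\gamma_2\not\equiv0$) in one line from Gibbs' inequality; the paper instead spends a full first-variation computation in $\gamma_2$, showing that $\mathcal{H}(0,\cdot)$ is maximised at $\gamma_2=0$, to reach the same fact. Second, your ray parametrisation $\phi(t)=\mathcal{H}(t\gamma_2,\gamma_2)$ makes the intermediate value step rigorous: the paper's bare appeal to ``continuity'' never specifies a one-parameter path through function space from $0$ to $\gamma_2$, and $\phi$ supplies exactly that. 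Third, your monotonicity claim $\phi'(t)\ge0$ --- after the half-line reduction the integrand carries the factor $t\,\gamma_2(x)^2\,[p(x)+p(-x)]$ over positive denominators --- is what actually delivers parts (b) and (c), which the paper's proof never addresses: it establishes only part (a), leaving (b) and (c) to follow implicitly from the preceding proposition's first variation, the same route you sketch for general $\gamma_1$ not proportional to $\gamma_2$. Your closing observation is also correct and worth keeping: the proposition as stated omits hypotheses it cannot do without, namely the skew conditions $p(|x|)\le p(x)$ and $\gamma_2(|x|)\le0$ inherited from Proposition~\ref{prop5}; without them $\phi(1)=\mathcal{H}(\gamma_2,\gamma_2)$ can be negative, in which case no admissible $\gamma^*$ exists, and the paper uses these conditions silently when it cites Proposition~\ref{prop5}. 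One small correction: you assert that the monotonicity also genuinely relies on those skew conditions, but your own computation shows otherwise --- the factor $t\,\gamma_2(x)^2\,[p(x)+p(-x)]$ is nonnegative regardless; only the positivity of the denominators (which you correctly derive from $|\gamma_2|\le p$ plus oddness) and the endpoint sign $\phi(1)\ge0$ require extra hypotheses.
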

\begin{proof}
It is helpful to first note that Proposition~\ref{prop5} implies that $\mathcal{H}(\gamma_2,\gamma_2)>0$ when $\gamma_2\neq0$.~Thinking of $\gamma_1(x)$ as fixed, the first variation of $\mathcal{H}(\cdot,\gamma_2)$ with respect to $\gamma_2(x)$ is given by
\begin{align*}
\delta\mathcal{H}(\cdot,\gamma_2)&=\int_{-\infty}^{\infty}\frac{\delta\mathcal{H}(\cdot,\gamma_2)}{\delta\gamma_2(x)}\delta\gamma_2(x)\ud x\\
&=\int_{-\infty}^{\infty}\frac{-p(x)}{p(x)-\gamma_2(x)}\delta\gamma_2(x)\ud x\\
&=\int_{-\infty}^{0}\frac{-p(x)}{p(x)-\gamma_2(x)}\delta\gamma_2(x)\ud x+\int_{0}^{\infty}\frac{-p(x)}{p(x)-\gamma_2(x)}\delta\gamma_2(x)\ud x\displaybreak[0]\\
&=\int_{-\infty}^{0}\frac{-p(x)}{p(x)-\gamma_2(x)}\delta\gamma_2(x)\ud x-\int_{0}^{-\infty}\frac{p(-x)}{p(-x)+\gamma_2(x)}\delta\gamma_2(x)\ud x\displaybreak[0]\\
&=\int_{-\infty}^{0}\frac{-p(x)}{p(x)-\gamma_2(x)}\delta\gamma_2(x)\ud x+\int_{-\infty}^{0}\frac{p(-x)}{p(-x)+\gamma_2(x)}\delta\gamma_2(x)\ud x\displaybreak[0]\\
&=\int_{-\infty}^{0}\left[\frac{p(-x)}{p(-x)+\gamma_2(x)}-\frac{p(x)}{p(x)-\gamma_2(x)}\right]\delta\gamma_2(x)\ud x\displaybreak[0]\\
&=\int_{-\infty}^{0}\frac{-[p(-x)+p(x)]\gamma_2(x)}{[p(-x)+\gamma_2(x)][p(x)-\gamma_2(x)]}\delta\gamma_2(x)\ud x\displaybreak[0]\\
&\le0,
\end{align*}
provided $\delta\gamma_2>0$ and $\delta\gamma_2(-x)=-\delta\gamma_2(x)$. In the fourth line a change of variable $x=-\tau$ was applied and then $\tau$ was replaced with $x$ since it is a dummy variable. It follows that $\mathcal{H}(\cdot,\gamma_2)$ has a maximum when $\gamma_2=0$, i.e.  $\mathcal{H}(\cdot,\gamma_2)\le\mathcal{H}(\cdot,0)$. In particular, $\mathcal{H}(0,\gamma_2)\le\mathcal{H}(0,0)=0$. For $\gamma_2\neq0$, we have the strict inequality,  $\mathcal{H}(0,\gamma_2)<0$. Since $\mathcal{H}(\gamma_2,\gamma_2)>0$, continuity implies that $\mathcal{H}(\gamma_1,\gamma_2)=0$ for some $\gamma_1(x)=\gamma^*(x)$ such that $|\gamma^*|<|\gamma_2|$, and this completes the proof. 
\end{proof}
\subsection{The Spherical Scoring Rule}
Given a forecast $f(x)$, the spherical scoring rule is given by
\begin{equation*}
S(f,X)=-\frac{f(X)}{||f||_2}.
\end{equation*}
If we define the operator $\rho f=f(x)/||f||_2$, the expected spherical score is the inner product
\begin{equation*}
\mathbb{E}[S(f,X)]=-\langle\rho f,p\rangle.
\end{equation*}
The minimum of this expectation is achieved if and only $f=p$ since it is a strictly proper scoring rule~\citep{frie-83}. We now state the following proposition:
\begin{proposition}
Given that $\gamma(-x)=-\gamma(x)$ with $\gamma(|x|)<0$, $|\gamma(x)|<p(x)$ and $p(|x|)\le p(x)$, then the spherical scoring rule prefers the forecast $f_+(x)$ over $f_-(x)$, i.e. $\mathbb{E}[S(f_+,X)]\le\mathbb{E}[S(f_-,X)]$ .
\end{proposition}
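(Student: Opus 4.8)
The plan is to mirror the geometric reduction used in the discrete spherical case and then finish with a short algebraic computation. First I would rewrite the expected score as $\mathbb{E}[S(f,X)]=-\langle f,p\rangle/\|f\|_2=-\|p\|_2\cos\theta$, where $\theta$ is the angle between $f$ and $p$ in the $L_2$ inner product. Since the score is a loss, ``prefers $f_+$'' means $\mathbb{E}[S(f_+,X)]\le\mathbb{E}[S(f_-,X)]$, which, after dividing by the common positive factor $\|p\|_2$, is equivalent to $\cos\theta_+\ge\cos\theta_-$, i.e.
\[
\frac{\langle f_+,p\rangle}{\|f_+\|_2}\ge\frac{\langle f_-,p\rangle}{\|f_-\|_2}.
\]
Writing $a=\|p\|_2^2$, $b=\langle\gamma,p\rangle$ and $c=\|\gamma\|_2^2$, and using $f_\pm=p\pm\gamma$, the quantities involved become $\langle f_\pm,p\rangle=a\pm b$ and $\|f_\pm\|_2^2=a\pm 2b+c$.

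The crucial step is to pin down the sign of $b=\langle\gamma,p\rangle$. I would fold the integral onto the positive half-line: substituting $x\mapsto-x$ on $(-\infty,0)$ and using $\gamma(-x)=-\gamma(x)$ gives
\[
b=\int_0^{\infty}\gamma(x)\bigl[p(x)-p(-x)\bigr]\ud x.
\]
On $(0,\infty)$ the hypothesis $\gamma(|x|)<0$ gives $\gamma(x)<0$, while $p(|x|)\le p(x)$ applied at the point $-x$ gives $p(x)\le p(-x)$, so the bracket is nonpositive; the integrand is therefore nonnegative and $b\ge0$. This is the continuous analogue of the condition $p>q$ in the categorical case, and I expect it to be the main obstacle, since it is where all the structural hypotheses (oddness of $\gamma$, its negativity on $x\ge0$, and the greater mass of $p$ on the negative axis) are actually used.

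With $b\ge0$ in hand the remainder is routine. The pointwise bound $|\gamma(x)|<p(x)$ yields $c<a$, whence by Cauchy--Schwarz $b\le\sqrt{ac}<a$, so both numerators $a\pm b$ are positive and both denominators $a\pm2b+c=\|f_\pm\|_2^2$ are positive. I may therefore square the target inequality and cross-multiply, reducing it to
\[
(a+b)^2(a-2b+c)\ge(a-b)^2(a+2b+c).
\]
Expanding both sides, the difference collapses to $4b\,(ac-b^2)$. Since $ac-b^2=\|p\|_2^2\|\gamma\|_2^2-\langle\gamma,p\rangle^2\ge0$ by Cauchy--Schwarz and $b\ge0$, this difference is nonnegative, giving $\cos\theta_+\ge\cos\theta_-$ and hence the claim. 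An alternative would be to imitate the discrete proof verbatim by defining $C(t)=\cos\theta$ along the scaling path $p+t\gamma$ and comparing $C(1)$ with $C(-1)$ via a derivative computation, but the direct comparison above avoids introducing a functional analogue of Proposition~\ref{prop:cos}.
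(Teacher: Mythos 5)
Your proposal is correct and follows essentially the same route as the paper's proof: both reduce the claim to comparing $\langle f_\pm,p\rangle/\|f_\pm\|_2$, justify squaring via positivity, collapse the difference algebraically to $\langle\gamma,p\rangle\bigl(\|p\|_2^2\|\gamma\|_2^2-\langle\gamma,p\rangle^2\bigr)$ times a positive factor, and then finish with Cauchy--Schwarz together with the folding argument showing $\langle\gamma,p\rangle\ge0$ from the oddness of $\gamma$ and the asymmetry of $p$. The only differences are cosmetic (your $\cos\theta$ framing, folding onto the positive rather than negative half-line, and cross-multiplying instead of using a common denominator).
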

\begin{proof}
The aim here is to show that $\mathbb{E}[S(f_+,X)]\le\mathbb{E}[S(f_-,X)]$, which is equivalent to $\langle\rho f_+,p\rangle\ge\langle\rho f_-,p\rangle$. Note that each of these inner products is non-negative since
\begin{eqnarray*}
\langle\rho f_\pm,p\rangle&=&\frac{\langle f_\pm,p\rangle}{||f_\pm||_2}\\
&=&\frac{\langle p\pm\gamma,p\rangle}{||f_\pm||_2}\\
&=&\frac{||p||_2^2\pm\langle\gamma,p\rangle}{||f_\pm||_2}\\
&\ge&0,
\end{eqnarray*}
due to Cauchy Schwartz's inequality, $\langle\pm\gamma,p\rangle\le||\gamma||_2||p||_2$, and the hypothesis, $|\gamma(x)|\le p(x)\Rightarrow||\gamma||_2\le||p||_2$. Therefore,  $\langle\rho f_+,p\rangle\ge\langle\rho f_-,p\rangle$ is equivalent to  $\langle\rho f_+,p\rangle^2\ge\langle\rho f_-,p\rangle^2$. It therefore suffices to show that the latter inequality holds.
\begin{eqnarray*}
\langle\rho f_+,p\rangle^2-\langle\rho f_-,p\rangle^2&=&\frac{\langle f_+,p\rangle^2}{||f_+||_2^2}-\frac{\langle f_-,p\rangle^2}{||f_-||_2^2}\\
&=&\frac{\langle p+\gamma,p\rangle^2}{||f_+||_2^2}-\frac{\langle p-\gamma,p\rangle^2}{||f_-||_2^2}\\
&=&\frac{(||p||_2^2+\langle \gamma,p\rangle)^2}{||f_+||_2^2}-\frac{(||p||_2^2-\langle \gamma,p\rangle)^2}{||f_-||_2^2}\\
&=&\frac{||f_-||_2^2(||p||_2^2+\langle \gamma,p\rangle)^2-||f_+||_2^2(||p||_2^2-\langle \gamma,p\rangle)^2}{||f_+||_2^2||f_-||_2^2}.
\end{eqnarray*}
Plugging in $||f_+||_2^2=||p||_2^2+2\langle\gamma,p\rangle+||\gamma||_2^2$ and $||f_-||_2^2=||p||_2^2-2\langle\gamma,p\rangle+||\gamma||_2^2$ into the numerator of the last expression, removing brackets and collecting like terms yield
\begin{eqnarray}
\langle\rho f_+,p\rangle^2-\langle\rho f_-,p\rangle^2=\frac{\langle\gamma,p\rangle(||p||_2^2||\gamma||_2^2-\langle\gamma,p\rangle^2)}{||f_+||_2^2||f_-||_2^2}.
\label{eqn:sph}
\end{eqnarray}
As a consequence of Cauchy-Schwartz's inequality, $||p||_2^2||\gamma||_2^2-\langle\gamma,p\rangle^2\ge0$. It will now be shown that under the hypothesis of the proposition, $\langle\gamma,p\rangle\ge 0$.
\begin{align*}
\langle\gamma,p\rangle&=\int_{-\infty}^{\infty}\gamma(x)p(x)\ud x\\
&=\int_{-\infty}^0\gamma(x)p(x)\ud x+\int_0^{\infty}\gamma(x)p(x)\ud x\displaybreak[0]\\
&=\int_{-\infty}^0\gamma(x)p(x)\ud x-\int_0^{-\infty}\gamma(-x)p(-x)\ud x\displaybreak[0]\\
&=\int_{-\infty}^0\gamma(x)p(x)\ud x+\int_{-\infty}^{0}\gamma(-x)p(-x)\ud x\displaybreak[0]\\
&=\int_{-\infty}^0\gamma(x)p(x)\ud x-\int_{-\infty}^{0}\gamma(x)p(-x)\ud x\displaybreak[0]\\
&=\int_{-\infty}^0\gamma(x)[p(x)-p(-x)]\ud x\displaybreak[0]\\
&\ge0,
\end{align*}
since $p(x)\ge p(-x)$ and $\gamma(x)\ge0$ for all $x\le0$. Hence, the right hand side of equation~(\ref{eqn:sph}) is non-negative.
\end{proof}

The distribution preferred by the spherical scoring rule is already known through Proposition~\ref{prop6} to be of lower entropy. As was the case in the binary case, the spherical scoring rule prefers an opposite distribution to the logarithmic scoring rule.
\subsection{Continuous Ranked Probability Score}
Finally, we consider the {\em Continuous Ranked Probability Score} (CRPS) of the density forecast $f(x)$ whose cumulative distribution is $F(x)$. The CRPS is a function of $F$ and the verification $X$ and is defined by~\citep{til-07}
\begin{equation*}
\mathrm{CRPS}(F,X)=\int_{-\infty}^{\infty}(F(\tau)-\mathbb{I}\{\tau\ge X\})^2\ud\tau.
%\label{eqn:crps1}
\end{equation*}
The above score may equivalently be written as
\begin{equation}
\mathrm{CRPS}(F,X)=\int_{-\infty}^XF^2(\tau)\ud\tau+\int_X^{\infty}(F(\tau)-1)^2\ud\tau.
\label{eqn:crps2}
\end{equation}
It follows from~(\ref{eqn:crps2}) that
\begin{equation}
\mathbb{E}[\mathrm{CRPS}(F,X)]=\int_{-\infty}^{\infty}p(x)\int_{-\infty}^xF^2(\tau)\ud\tau\ud x+\int_{-\infty}^{\infty}p(x)\int_x^{\infty}(F(\tau)-1)^2\ud\tau\ud x,
\label{eqn:crps3}
\end{equation}
where $p(x)$ is the true (or target) density function. If $P(x)=\int_{-\infty}^xp(\tau)\ud\tau$, we can then apply the integration by parts formula to each term on the right hand side of~(\ref{eqn:crps3}) to obtain
\begin{align*}
\int_{-\infty}^{\infty}p(x)\int_{-\infty}^xF^2(\tau)\ud\tau\ud x&=\left.P(x)\int_{-\infty}^xF^2(\tau)\ud\tau\right|_{-\infty}^{\infty}-\int_{-\infty}^{\infty}P(x)F^2(x)\ud x\\
&=\int_{-\infty}^{\infty}F^2(x)\ud x-\int_{-\infty}^{\infty}P(x)F^2(x)\ud x
\end{align*}
and 
\begin{align*}
\int_{-\infty}^{\infty}p(x)\int_x^{\infty}(F(\tau)-1)\ud\tau\ud x&=\left.P(x)\int_x^{\infty}(F(\tau)-1)^2\ud\tau\right|_{-\infty}^{\infty}+\int_{-\infty}^{\infty}P(x)(F(x)-1)^2\ud x\\
&=0+\int_{-\infty}^{\infty}P(x)(F(x)-1)^2\ud x,
\end{align*}
whence
\begin{equation}
\mathbb{E}[\mathrm{CRPS}(F,X)]=\int_{-\infty}^{\infty}P(x)(1-P(x))\ud x+\int_{-\infty}^{\infty}(F(x)-P(x))^2\ud x,
\label{eqn:crps4}
\end{equation}
after some algebraic manipulation. Define $F(x)=P(x)+\Gamma(x)$, where $\Gamma(x)=\int_{-\infty}^x\gamma(\tau)\ud\tau$ and $f(x)=p(x)+\gamma(x)$. If we also define $G(P)=\int P(x)(1-P(x))\ud x$, then equation~(\ref{eqn:crps4}) can be re-written as $\mathbb{E}[\mathrm{CRPS}(F,X)]=G(P)+||\Gamma||_2^2$. We have thus proved the following proposition:
\begin{proposition} 
\label{prop9}
The Continuous Ranked Probability Score does not distinguish between distributions whose cumulative errors from the target distribution are equal in the sense of the $L^2$ norm. 
\end{proposition}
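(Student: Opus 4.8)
The plan is to read the result directly off the decomposition of the expected score established immediately above the statement. The key identity is equation~(\ref{eqn:crps4}), which expresses $\mathbb{E}[\mathrm{CRPS}(F,X)]$ as a sum of two terms: an integral $\int_{-\infty}^{\infty}P(x)(1-P(x))\,\ud x$ that involves only the target cumulative distribution $P$, and the squared $L_2$ distance $\int_{-\infty}^{\infty}(F(x)-P(x))^2\,\ud x$ between the forecast and target cumulative distribution functions.

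First I would set $\Gamma(x)=F(x)-P(x)$, so that $\Gamma$ is exactly the cumulative error of the forecast from the target, writing $\Gamma(x)=\int_{-\infty}^x\gamma(\tau)\,\ud\tau$ with $\gamma=f-p$. With the abbreviation $G(P)=\int_{-\infty}^{\infty}P(x)(1-P(x))\,\ud x$, the decomposition becomes $\mathbb{E}[\mathrm{CRPS}(F,X)]=G(P)+||\Gamma||_2^2$. Since $G(P)$ depends only on the fixed target, the entire dependence of the expected CRPS on the forecast is carried by the single scalar $||\Gamma||_2$.

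From here the conclusion is immediate. Given any two forecasts $F_1,F_2$ with cumulative errors $\Gamma_1,\Gamma_2$ satisfying $||\Gamma_1||_2=||\Gamma_2||_2$, we obtain $\mathbb{E}[\mathrm{CRPS}(F_1,X)]=G(P)+||\Gamma_1||_2^2=G(P)+||\Gamma_2||_2^2=\mathbb{E}[\mathrm{CRPS}(F_2,X)]$, so the score assigns the two forecasts the same value and therefore cannot discriminate between them.

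The only genuine work, and the step I expect to require the most care, is justifying the integration-by-parts manipulations that produced~(\ref{eqn:crps4}); in particular, one must verify that the boundary terms vanish. This needs $P$ to be a genuine cumulative distribution function, with $P(-\infty)=0$ and $P(\infty)=1$, together with enough decay that $\int_{-\infty}^x F^2(\tau)\,\ud\tau$ and $\int_x^{\infty}(F(\tau)-1)^2\,\ud\tau$ stay controlled, so that the products $P(x)\int_{-\infty}^x F^2$ and $P(x)\int_x^{\infty}(F-1)^2$ tend to zero at the relevant endpoints. Under the standing assumption that $F$ is a proper forecast CDF these conditions hold, and the decomposition---and hence the proposition---follows.
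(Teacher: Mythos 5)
Your proposal is correct and takes essentially the same route as the paper: both arguments simply read the result off the decomposition~(\ref{eqn:crps4}), i.e. $\mathbb{E}[\mathrm{CRPS}(F,X)]=G(P)+||\Gamma||_2^2$ with $\Gamma=F-P$, so that any two forecasts with equal $||\Gamma||_2$ receive the same expected score. Your closing remark about verifying that the boundary terms in the integration by parts vanish is the only place you go beyond the paper, which performs that manipulation without comment.
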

Consequently, the CRPS does not distinguish between density forecasts whose errors from the target density differ by the sign. To see this, consider two forecasts which whose errors from the target density are $\gamma_1(x)$ and $\gamma_2(x)$ respectively, with $\gamma_1(x)=-\gamma_2(x)$. It then follows that
\begin{align*}
||\Gamma_1||_2^2-||\Gamma_2||_2^2&=\int_{-\infty}^{\infty}\Gamma_1^2(x)\ud x-\int_{-\infty}^{\infty}\Gamma_2^2(x)\ud x\\
&=\int_{-\infty}^{\infty}\left(\Gamma_1^2(x)-\Gamma_2^2(x)\right)\ud x\\
&=\int_{-\infty}^{\infty}\left(\Gamma_1(x)+\Gamma_2(x)\right)\left(\Gamma_1(x)-\Gamma_2(x)\right)\ud x\\
&=0,
\end{align*}
since $\gamma_1(x)=-\gamma_2(x)\Rightarrow \Gamma_1(x)+\Gamma_2(x)=0$. 

As a final remark, we note that the second term in the expectation of CRPS in~(\ref{eqn:crps4}) somewhat resembles the {\em mean squared error criterion} discussed in~\cite{gra-06}. The mean squared error of the forecast $F(x)$ is $\mathbb{E}[\Gamma^2(X)]=\int p(x)\Gamma^2(x)\ud x$. Likewise, the mean squared error criterion does not distinguish between forecasts whose errors from the target density differ by a sign (i.e. $\gamma_1(x)=-\gamma_2(x)$) because
\begin{align*}
\mathbb{E}[\Gamma_1^2(X)]-\mathbb{E}[\Gamma_2^2(X)]&=\int_{-\infty}^{\infty}p(x)\left(\Gamma_1^2(x)-\Gamma_2^2(x)\right)\ud x\\
&=\int_{-\infty}^{\infty}p(x)\left(\Gamma_1(x)+\Gamma_2(x)\right)\left(\Gamma_1(x)-\Gamma_2(x)\right)\ud x\\
&=0.
\end{align*}
\section{Discussion and Conclusions}
\label{sec:disc}
This manuscript contrasted how certain scoring rules would rank competing forecasts of specified departures from the target distribution. In the categorical case, we considered the Brier Score, the logarithmic scoring rule and the spherical scoring rule, focusing on the binary case. Given two forecasts whose errors from the target distribution differ only by the sign, we found that the logarithmic scoring rule prefers the higher entropy distribution whilst the spherical scoring rule prefers the lower entropy distribution. The Brier score does not distinguish the two distributions. The logarithmic scoring rule selects a lower entropy forecast only if it is nearer to the target distribution in the sense of the $L^2$ norm and vice versa for the spherical scoring rule. 

We extended the investigation from binary forecasts to the continuous case, where we considered the Quadratic score, Logarithmic scoring rule, spherical scoring rule and the Continuous Ranked Probability Score (CRPS). Just like the Brier score in the binary case, the Quadratic Score does not distinguish between forecasts with equal $L^2$ norms of their errors from the target distribution. On the other hand, given two density forecasts whose errors from the target forecast differ by a sign, the logarithmic scoring rule prefers the distribution with higher entropy whilst the spherical scoring rule prefers the one with lower entropy: bear in mind that higher entropy corresponds to more uncertainty~\citep{sha}. The CRPS is indifferent to forecasts whose errors from the target density differ by a sign.

Some have criticised the logarithmic scoring rule for placing a heavy penalty on assigning zero probability to events that materialise~\citep[e.g.][]{boe-11,til-07}; but assigning zero probability to events that are possible is also discouraged by {\em Laplace's rule of succession}~\citep{jay-03}. What has been shown here is that the logarithmic scoring rule is good at highlighting forecasts that are less uncertain than ideal forecasts. Such forecasts may have to be dealt with appropriately. One way of dealing with such forecasts is discussed in~\cite{mac-earl}. Nonetheless, given two density forecasts, the logarithmic scoring rule does not just reject the more extreme in the sense of entropy: If both forecasts are more uncertain that the ideal forecast, the logarithmic scoring rule will tend to prefer the less uncertain of the two.

Does our consideration of departures from ideal forecasts amount to advocating for dishonesty by forecasters? Not at all. We are merely making an observation that forecasters can honestly report predictive distributions that have departures from ideal forecasts. Although strictly proper scoring rules encourage forecasters to be honest when they report their best judgements, they do not guarantee that the reported forecasts will coincide with ideal forecasts. Our point then is that using a given scoring rule may inherently favour departures from ideal forecasts in one direction more than in another. Therefore, when one selects a scoring rule to estimate distribution parameters or choose between two competing experts, it amounts to deciding preferred departures.

Which scoring rule one should choose will depend on the application at hand. Combining insights of scoring rules set forth in this paper with an understanding of the situation at hand can help decide which scoring rule is most appropriate. The issue to consider may be decisions associated with high impact, low probability events. To illustrate our point, let us consider inflation forecasting. It is undesirable to over estimate the probability for extreme inflation because of the panic it can create as buyers rush to spend now before prices rise. In order to manage peoples expectations better, the spherical scoring rule is preferable in this case. As another example, consider seasonal forecasts of drought in the UK, which is arguably a rare event. Under estimating the probability of this event could result in water shortages since water companies might not be stringent on water usage. In this case, the logarithmic scoring rule is preferable. 
\section*{Acknowledgements}
This work was supported by the RCUK Digital Economy Programme at the University of Reading via EPSRC grant EP/G065802/1 The Horizon Digital Economy Hub.
\bibliographystyle{jofstyle}
\bibliography{refs}
\end{document}